\documentclass[a4paper, reqno, 10pt]{amsart}

\usepackage{amsmath,amssymb,amsthm}
\usepackage{mathrsfs}
\usepackage{natbib}
\usepackage{mathtools}

\usepackage[margin=1in]{geometry}
\allowdisplaybreaks[4]

\theoremstyle{plain}
\newtheorem{thm}{Theorem}[section]
\newtheorem{lem}[thm]{Lemma}

\newtheorem{cor}[thm]{Corollary}

\theoremstyle{definition}

\newtheorem{remk}[thm]{Remark}

\numberwithin{equation}{section}

\begin{document}
	
\title{
Lower bounds for the lifespan of solutions to the Euler--Korteweg equations}
		
\author[H. Kurabe]{Haruhiro Kurabe}
\address[H. Kurabe]{
	Graduate School of Mathematical Sciences, The University of Tokyo, 
	Tokyo 153-8914, Japan
}
\email{kurabe125@gmail.com}
	
\author[R. Takada]{Ryo Takada}
\address[R. Takada]{
	Graduate School of Mathematical Sciences, The University of Tokyo, 
	Tokyo 153-8914, Japan
}
\email{takada@ms.u-tokyo.ac.jp}

\author[T. Yoshizawa]{Tomoaki Yoshizawa}
\address[T. Yoshizawa]{
	Graduate School of Mathematical Sciences, The University of Tokyo, 
	Tokyo 153-8914, Japan
}
\email{tomoaki@ms.u-tokyo.ac.jp}

\keywords{Euler--Korteweg equations, lifespan estimates, Mach and Weber numbers}
\subjclass[2020]{35Q35, 76N30, 35B30}
\begin{abstract}
We consider the initial value problem for the Euler--Korteweg equations in the class of irrotational flows. 
In every space dimension $d\geqslant2$, we establish a quantitative lower bound for the lifespan of solutions 
that explicitly exhibits its dependence on the Weber number, the Mach number, and the Sobolev norm of the initial perturbation.
In particular, this lower bound can be taken arbitrarily large
in a low-Weber-number regime under a suitable algebraic relation
between the Mach and Weber numbers.
\end{abstract}
	
\maketitle
	
\section{Introduction}

Let us consider the initial value problem for the compressible Euler--Korteweg equations
in $\mathbb{R}^d \, (d \geqslant 2)$:
\begin{equation}\label{EK_orig}
	\begin{cases}
		\partial_t \rho + \operatorname{div}(\rho u)=0
		&\quad t>0, \, x\in \mathbb{R}^d,
		\\
		\rho \left(
		\partial_t u +  u\cdot \nabla u
		\right) + \nabla \{ P(\rho) \}
		= \rho \nabla 
		\left\{
		K(\rho) \Delta \rho + \dfrac{1}{2}K^\prime(\rho)|\nabla \rho|^2
		\right\}
		&\quad t>0, \, x\in \mathbb{R}^d,
		\\
		\rho(0,x)=\rho_0(x),
		\quad
		u(0,x) = u_0(x)
		&\quad x \in \mathbb{R}^d.
	\end{cases}
\end{equation}
Here, the unknown functions $\rho=\rho(t,x)>0$ and $u=(u_1(t,x), \dots, u_d(t,x))$ denote
the density and the velocity of the fluid, respectively.
The pressure $P=P(\rho)$ is assumed to be a smooth function on $(0,\infty)$
satisfying $P^\prime(\rho_\ast)>0$ for some constant $\rho_\ast>0$.
The function $K=K(\rho)$ represents the capillary coefficient,
which is supposed to be smooth and positive on $(0,\infty)$.
The initial density and velocity are given by $\rho_0=\rho_0(x)>0$ and 
$u_0=(u_{0,1}(x),\dots, u_{0,d}(x))$, respectively.

In this paper, we are concerned with lower bounds for the lifespan of classical solutions
to \eqref{EK_orig}. More precisely, for the nondimensionalized system derived from \eqref{EK_orig},
we establish a quantitative lower bound that explicitly exhibits its dependence on
 the Weber number $\mathrm{We}$, the Mach number $\mathrm{Ma}$,
and the size of the initial perturbation in the Sobolev norms.
In particular, with $\kappa=1/\mathrm{We}$ and $c=1/\mathrm{Ma}$,
this lower bound guarantees arbitrarily long lifespans for sufficiently large $\kappa$,
provided that the algebraic growth rate of $c$ is close enough to that of $\sqrt{\kappa}$.

Before stating our main result precisely,
we shall review previous work on the Euler--Korteweg equations \eqref{EK_orig}.
For the thermodynamic derivation of the Korteweg stress tensor 
and the resulting inviscid, isothermal model, we refer to \citep{DS85,BGDDJ05}.
Concerning the local well-posedness of \eqref{EK_orig},
\citep{BGDD07} established the existence and uniqueness of local solutions
and the continuity of the solution map in every space dimension $d \geqslant 1$
for a general smooth positive capillary coefficient $K(\rho)$, a smooth pressure $P(\rho)$,
and initial data 
$(\rho_0-\rho_\ast, u_0) \in H^{s+2}(\mathbb{R}^d) \times H^{s+1}(\mathbb{R}^d) \ (s>d/2)$
such that $\rho_0$ is bounded away from zero.
A blow-up criterion for the corresponding local solutions was also obtained in \citep{BGDD07}.
As for the global well-posedness of \eqref{EK_orig},
\citep{AH18} treated the particular case $K(\rho)=\kappa_0/\rho$ with $\kappa_0>0$ and $P(\rho)= \rho^2/2$,
and established the existence of a unique global solution
for sufficiently small irrotational initial data in space dimensions $d\geqslant 3$.
For this choice of $K(\rho)$ and $P(\rho)$, the Euler--Korteweg system for irrotational flows corresponds to 
the Gross--Pitaevskii equation via the Madelung transform (see also \citep{CDS12}).
More generally, \citep{AH17} proved the existence of a unique global solution
in space dimensions $d\geqslant 3$ for a general smooth positive capillary coefficient $K(\rho)$,
a smooth pressure $P(\rho)$ satisfying the stability condition $P^\prime(\rho_\ast)>0$, 
and small irrotational initial data with sufficient regularity and suitable spatial decay.
The proof combines higher-order energy estimates with dispersive estimates for the linearized Gross--Pitaevskii equation
and employs the space-time resonance method in dimensions $d=3,4$ 
(see \citep{GNT06,GNT07,GNT09} for related results on the Gross--Pitaevskii equation).
The existence and stability of planar traveling waves have also been investigated in \citep{Aud17,BGDDJ05,BG13}.
For results on weak solutions, we refer to \citep{AM09,AM12,DFM15,GLT17}.

We next focus on known results concerning lower bounds for the lifespan of solutions to \eqref{EK_orig}.
For a general smooth positive capillary coefficient $K(\rho)$, a general smooth pressure $P(\rho)$,
and initial data $(\rho_0 - \rho_\ast, u_0) \in H^{s+2}(\mathbb{R}^d) \times H^{s+1}(\mathbb{R}^d) \ (s>d/2)$,
\citep{BGDD07} obtained the lower bound
 \[
 T \gtrsim \log\left(1+\frac{1}{Z_0+Z_0^2}\right)
 \]
in every space dimension $d\geqslant 1$,
where $Z_0 \coloneqq \| z_0 \|_{H^{s+1}} + \| z_0 \|_{H^{s-1}}^\beta$,
$z_0 \coloneqq u_0 + i \nabla \mathcal{L}(\rho_0)$, $\mathcal{L}^\prime(\rho) = \sqrt{K(\rho)/\rho}$,
and $\beta \geqslant 3$ is an exponent depending only on $s$ and $d$.
For the particular case $K(\rho) = 1/\rho$ and $P(\rho)=\rho^2$,
\citep{BDS10} considered small-amplitude, long-wave solutions to \eqref{EK_orig} of the form
 \[
 \rho(t,x) = 1 + \frac{\varepsilon}{\sqrt{2}} \tilde{\rho}(\varepsilon t, \varepsilon x),
 \quad
 u(t,x) = \varepsilon \tilde{u}(\varepsilon t, \varepsilon x)
 \]
with a small parameter $\varepsilon \in (0,1]$, 
and established the following lower bounds on the lifespan in the original time variable:
\[
T \gtrsim 
\begin{cases}
	\varepsilon^{-3} \| (\tilde{\rho}_0, \tilde{u}_0) \|_{H^{s+3} \times H^{s+2}}^{-2}
	&\quad (d\geqslant 4),
	\\
	\min\left\{
	\varepsilon^{-(2+\theta)} \| (\tilde{\rho}_0, \tilde{u}_0) \|_{H^{s+3} \times H^{s+2}}^{-(1+\theta)}, \
	 \varepsilon^{-4} \| (\tilde{\rho}_0, \tilde{u}_0) \|_{H^{s+3} \times H^{s+2}}^{-2}
	\right\}
	&\quad (d=3, \ 0<\theta<1),
	\\
	\min\left\{
	\varepsilon^{-\frac{7}{3}} \| (\tilde{\rho}_0, \tilde{u}_0) \|_{H^{s+3} \times H^{s+2}}^{-\frac{4}{3}}, \
	\varepsilon^{-(2+r)} \| (\tilde{\rho}_0, \tilde{u}_0) \|_{H^{s+3} \times H^{s+2}}^{-r}
	\right\}
	&\quad \left(d=2, \  \frac{2}{s}<r<2 \right)
\end{cases}
\]
for irrotational initial profiles $(\tilde{\rho}_0, \tilde{u}_0) \in H^{s+3}(\mathbb{R}^d) \times H^{s+2}(\mathbb{R}^d) \ (s>d/2)$
satisfying $\varepsilon  \| (\tilde{\rho}_0, \tilde{u}_0) \|_{H^{s+3} \times H^{s+2}} \lesssim 1$.
\citep{BGC18} also considered solutions to \eqref{EK_orig} of the form
 \[
 \rho(t,x)=\rho_\ast+\eta\tilde{\rho}(\varepsilon t,\varepsilon x),
 \quad
 u(t,x)=\eta\tilde{u}(\varepsilon t,\varepsilon x)
 \]
for a scaling parameter $\varepsilon>0$ and a small amplitude parameter $\eta \in (0,1]$, 
and derived the lower bound on the lifespan
\[
 T \gtrsim \frac{1}{\varepsilon \eta}
\]
for a smooth positive capillary coefficient $K(\rho)$, a smooth pressure $P(\rho)$
satisfying the stability condition $P^\prime(\rho_\ast)>0$,
and rescaled initial data $(\tilde{\rho}_0, \tilde{u}_0) \in H^{s+2}(\mathbb{R}^d) \times H^{s+1}(\mathbb{R}^d) \ (s>d/2)$
with $\| (\tilde{\rho}_0, \tilde{u}_0) \|_{H^{s+1}} + \varepsilon \| \tilde{\rho}_0 \|_{H^{s+2}} \lesssim 1$
in every space dimension $d\geqslant 1$.
In space dimensions $d\geqslant3$, \citep{Aud21} obtained lower bounds
on the lifespan in terms of the size of the solenoidal component of the initial velocity
for a general smooth positive capillary coefficient $K(\rho)$
and a smooth pressure $P(\rho)$ satisfying the stability condition $P^\prime(\rho_\ast)>0$.
Let $\mathbb{Q}\coloneqq-(-\Delta)^{-1}\nabla\operatorname{div}$
and $\mathbb{P}\coloneqq I-\mathbb{Q}$
denote the projections onto potential and solenoidal vector fields, respectively.
For $d\geqslant 5$, \citep{Aud21} proved that if 
$\| \rho_0-\rho_\ast \|_{H^{N+1}\cap W^{k+1,\frac{4}{3}}}$,
$\| u_0 \|_{H^N\cap W^{k,\frac{4}{3}}}$
and $\| \mathbb{P}u_0 \|_{W^{k,4}}$ are sufficiently small with suitably large $(N, k) \in \mathbb{N}^2$,
then the lifespan satisfies
\[
T\gtrsim\frac{1}{\| \mathbb{P}u_0 \|_{W^{k,4}}}.
\]
In dimensions $d=3,4$, the corresponding lower bound takes the form
\[
T\gtrsim
\frac{1}{
	\| \mathbb{P}u_0 \|_{W^{k,p}\cap W^{k,p^\prime}}
	+\| |x|\mathbb{P}u_0 \|_{L^2}
}
\]
for some exponent $p>2d/(d-2)$, provided that
$\| \rho_0-\rho_\ast \|_{H^{N+1}\cap W^{k+1,p^\prime}}$,
$\| |x|(\rho_0-\rho_\ast) \|_{L^2}$,
$\| u_0 \|_{H^N\cap W^{k,p^\prime}}$,
$\| |x|\mathbb{Q}u_0 \|_{L^2}$
and 
$\| \mathbb{P}u_0 \|_{W^{k,p}\cap W^{k,p^\prime}}
+\| |x|\mathbb{P}u_0 \|_{L^2}$
are sufficiently small with $(N, k) \in \mathbb{N}^2$ chosen large enough.
In particular, if $u_0$ is irrotational, then $\mathbb{P}u_0=0$
and the global well-posedness result of \citep{AH17} is recovered.

The present study is motivated in part by the lifespan estimate
for the three-dimensional compressible Euler equations with a Coriolis term
established in \citep{KPTW26}.
This estimate makes its dependence on the Mach number,
the Rossby number, and the size of the initial perturbation explicit.
Under the polytropic pressure law
$p(n)=n^\gamma/\gamma$ with $\gamma>1$,
the system considered in \citep{KPTW26} can be written as
\begin{equation}\label{EC}
	\begin{cases}
		\partial_t \rho + c \operatorname{div} u + u \cdot \nabla \rho + \alpha \rho \operatorname{div} u = 0, \\
		\partial_t u + \varepsilon^{-1} e_3 \times u + c \nabla\rho + u\cdot \nabla u + \alpha \rho \nabla\rho = 0,
		\\
		\rho(0,x) = \rho_0(x), \quad u(0,x) = u_0(x).
	\end{cases}
\end{equation}
Here, $\rho$ represents the perturbation of the sound-speed variable from a constant state,
$\alpha=(\gamma-1)/2$, and $e_3=(0,0,1)$ is the vertical unit vector.
The parameters $c=1/\mathrm{Ma}$ and $\varepsilon=\mathrm{Ro}$
denote the inverse Mach number and the Rossby number, respectively.
It was shown in \citep{KPTW26} that for any $q\in(2,\infty)$
and initial perturbation $(\rho_0,u_0)\in H^s(\mathbb{R}^3) \ (s>7/2)$,
the lifespan of solutions to \eqref{EC} satisfies the lower bound
\[
T\gtrsim
\varepsilon^{-\frac{1}{q-1}}
\min\left\{1,(c\varepsilon)^{\frac{3}{q-1}}\right\}
\|(\rho_0,u_0)\|_{H^s}^{-\frac{q}{q-1}}.
\]
In particular, this reduces to
$
T \gtrsim
\varepsilon^{-\frac{1}{q-1}}
\|(\rho_0,u_0)\|_{H^s}^{-\frac{q}{q-1}}$
in the regime $c\varepsilon\geqslant1$.
Thus, the lower bound increases with the rotation speed $\varepsilon^{-1}$.
For a related quantitative lifespan estimate in the setting of 
the two-dimensional inviscid Boussinesq system, see also \citep{JK26}.

In order to state our result more precisely, we first derive a non-dimensional form of the system \eqref{EK_orig}.
Let $L>0$ and $V>0$ denote the reference length and velocity scales, respectively.
We introduce the non-dimensional variables as
\[
t = \frac{L}{V} \tilde{t},
\quad 
x = L \tilde{x},
\quad
\rho(t,x) = \rho_\ast \tilde{\rho}(\tilde{t}, \tilde{x}),
\quad
u(t,x) = V \tilde{u}(\tilde{t}, \tilde{x})
\]
and
\[
P(\rho) = \rho_\ast P^\prime(\rho_\ast) \tilde{P}(\tilde{\rho}),
\quad
K(\rho) = K(\rho_\ast) \tilde{K}(\tilde{\rho}).
\]
With this scaling, the system \eqref{EK_orig} takes the following dimensionless form, 
where we omit the tildes for notational convenience:
\begin{equation}\label{EK_nond}
	\begin{cases}
		\partial_t \rho + \operatorname{div} (\rho u) =0, \\
		\rho\left(
		\partial_t u + u\cdot \nabla u
		\right) + c^2\nabla \{ P(\rho) \}
		= \kappa \rho \nabla \left\{
		K(\rho) \Delta \rho + \dfrac{1}{2}K^\prime(\rho) |\nabla \rho|^2
		\right\}
	\end{cases}
\end{equation}
with
\begin{equation*}
	P^\prime(1) = K(1) =1.
\end{equation*}
The non-dimensional parameters $c>0$ and $\kappa>0$ are given by
\begin{equation*}
	c=\frac{1}{\mathrm{Ma}} \coloneqq \frac{\sqrt{P^\prime(\rho_\ast)}}{V},
	\quad
	\kappa=\frac{1}{\mathrm{We}} \coloneqq \frac{\rho_\ast K(\rho_\ast)}{L^2V^2},
\end{equation*}
where $\mathrm{Ma}$ and $\mathrm{We}$ denote the Mach number and the Weber number associated with the above scaling, respectively.

Following \citep{BGDD07, AH17}, we reformulate the system \eqref{EK_nond} in terms of a density perturbation.
In what follows, we restrict our attention to irrotational flows and write
\[
u=\nabla\phi.
\]
We introduce the change of density variable
\begin{equation*}
	\mathcal{L}(\rho) 
	\coloneqq
	1+\int_1^{\rho} \sqrt{\frac{K(\tau)}{\tau}}\,d\tau,
	\quad \rho>0.
\end{equation*}
Since $\mathcal{L}^\prime(\rho)=\sqrt{K(\rho)/\rho}>0$,
the map $\mathcal{L}$ is a $C^\infty$-diffeomorphism from $(0,\infty)$ onto its image.
Then we define the rescaled density perturbation $\ell$ by
\[
\ell\coloneqq\frac{\mathcal{L}(\rho)-1}{\lambda},
\quad
\lambda\coloneqq \frac{1}{\max\{c,\sqrt{\kappa}\}}.
\]
Equivalently, $\rho=\mathcal{L}^{-1}(1+\lambda\ell)$.
We also introduce
\begin{equation*}
	a(\rho)\coloneqq\sqrt{\rho K(\rho)},
	\quad
	g(\rho)\coloneqq\int_1^\rho\frac{P'(\tau)}{\tau}\,d\tau,
	\quad 
	\tilde{a}\coloneqq a\circ\mathcal{L}^{-1},
	\quad
	\tilde{g}\coloneqq g\circ\mathcal{L}^{-1}.
\end{equation*}
Note that the normalizations $P'(1)=K(1)=\mathcal{L}(1)=1$ imply
$\tilde{a}(1)=\tilde{g}^\prime(1)=1$ and $\tilde{g}(1)=0$.
Under the above change of variables, a direct computation shows that 
the system \eqref{EK_nond} can be written in terms of $(\ell,\nabla\phi)$ as
\begin{equation}\label{EK_enp}
	\begin{cases}
		\partial_t \ell + \dfrac{1}{\lambda} \Delta \phi
		=- \nabla\phi \cdot \nabla \ell - \dfrac{1}{\lambda}
		\left\{
		\tilde{a}(1+\lambda \ell) -1
		\right\} \Delta \phi,
		\\[2mm]
		\partial_t (\nabla\phi)  
		+ (c^2 - \kappa \Delta) \lambda \nabla \ell
		\\
		\quad = 
		- \dfrac{1}{2}\nabla |\nabla \phi|^2
		+ \dfrac{\kappa \lambda^2}{2} \nabla |\nabla \ell|^2
		+ 
		\kappa \lambda \nabla 
		\left[
		\left\{
		\tilde{a}(1+\lambda \ell) -1
		\right\} \Delta \ell
		\right]
		- c^2 \nabla
		\left\{
		\tilde{g}(1+\lambda \ell) - \lambda \ell
		\right\},
		\\
		\ell(0,x) = \ell_0(x),
		\quad 
		\nabla \phi(0,x) = \nabla \phi_0(x).
	\end{cases}
\end{equation}
Here, $\ell_0
\coloneqq \lambda^{-1}\left\{ \mathcal{L}(\rho_0)-1 \right\}$
and $\nabla\phi_0\coloneqq u_0$.

We are now ready to state our main result for the reformulated system \eqref{EK_enp}.
In this paper, motivated in part by the lifespan estimate in \citep{KPTW26},
we shall derive a quantitative lower bound for the lifespan whose dependence on
the non-dimensional parameters 
$c$ and $\kappa$ and on the Sobolev norm of the initial perturbation
$(\ell_0,\nabla\phi_0)$ is explicit.
The result applies in every space dimension $d\geqslant2$
for the general capillary coefficient $K(\rho)$ and pressure $P(\rho)$ 
introduced above, under the stability condition $P^\prime(1)=1$.
In particular, it follows from our lower bound that
the lifespan can be made arbitrarily long by taking $\kappa$ sufficiently large, provided that
$c=\kappa^\alpha$ with $\frac{1}{2}-\frac{1}{4q}
<\alpha<
\frac{1}{2}+\frac{1}{2q}$ and $2<q<\infty$.

The precise statement of our main result is as follows.

\begin{thm}\label{main}
	Let $d \geqslant 2$ be an integer.
	Assume that 
	$K\in C^\infty((0,\infty);(0,\infty))$ and
	$P\in C^\infty((0,\infty);\mathbb{R})$ with $K(1)=P'(1)=1$.
	Let $q\in(2,\infty)$ if $d=2$, and let $q\in[2,\infty)$ if $d\geqslant 3$.
	Suppose that $s \in \mathbb{N}$ satisfies $s > d/2$.
	Then, there exist positive constants $\varepsilon=\varepsilon(d,s,K,P)$
	and $\delta=\delta(d,s,q,K,P)$ such that for every $c, \kappa>0$
	and $(\ell_0, \nabla \phi_0) \in H^{s+4}(\mathbb{R}^d) \times H^{s+3}(\mathbb{R}^d)$
	satisfying
	 \begin{equation}\label{small1}
	 	\|\nabla\phi_0\|_{H^{s}} + 
	 	\| \sqrt{c^2-\kappa \Delta} \, (\lambda \ell_0) \|_{H^{s}} \leqslant \varepsilon c,
	 \end{equation}
	 the system \eqref{EK_enp} admits a unique classical solution
	 $(\ell,\nabla\phi)$ in the class
	 \begin{equation*}
	 	 (\ell,\nabla\phi)
	 	\in
	 	C\left(
	 	[0,T];
	 	H^{s+4}(\mathbb{R}^d)
	 	\times
	 	H^{s+3}(\mathbb{R}^d)
	 	\right)
	 	\cap
	 	C^1\left(
	 	[0,T];
	 	H^{s+2}(\mathbb{R}^d)
	 	\times
	 	H^{s+1}(\mathbb{R}^d)
	 	\right),
	 \end{equation*}
	where the existence time $T$ can be chosen to satisfy
	\begin{equation}\label{lowbd}
		T \geqslant \delta
		\kappa^{\frac{q+1}{2(q-1)}} c^{\frac{2q}{q-1}}
		\left(
		\| (c^2-\kappa \Delta)^{\frac{3}{2}} \nabla \phi_0 \|_{H^s}
		+ \| (c^2-\kappa \Delta)^2 \lambda \ell_0 \|_{H^s}
		\right)^{-\frac{q}{q-1}}.
	\end{equation}
\end{thm}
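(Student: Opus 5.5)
We combine a high-order energy estimate for \eqref{EK_enp} with a Strichartz-type (dispersive) estimate for the linear propagator, in the spirit of \citep{KPTW26}. First we diagonalize the linear part. Set $\Lambda\coloneqq\sqrt{c^2-\kappa\Delta}$ and introduce the complex unknown
\[
U\coloneqq |\nabla|^{-1}\,\Lambda(\lambda\ell)\;-\; i\,|\nabla|^{-1}\operatorname{div}\nabla\phi\quad(\text{or equivalently } \Lambda\lambda\ell + i R\cdot\nabla\phi,\ R \text{ the Riesz transform}).
\]
The linear system then becomes $\partial_t U - i\,p(D) U = N(U)$, where $p(\xi)=|\xi|\sqrt{c^2+\kappa|\xi|^2}$ is the Bogoliubov dispersion. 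The smallness condition \eqref{small1} is exactly $\|U(0)\|_{H^s}\lesssim \varepsilon c$.

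\textbf{Energy estimate.} Using the symmetrizer of \citep{BGDD07,AH17} (the extended unknown $z=\nabla\phi+i\sqrt{\kappa}\lambda\nabla\ell$ weighted by $\tilde a$, together with the $c^2$-pressure part), we derive for $E(t)\simeq \|\nabla\phi\|_{H^{s+3}}$-type norms, namely $\|\Lambda^3\nabla\phi\|_{H^s}+\|\Lambda^4\lambda\ell\|_{H^s}$, the inequality
\[
\frac{d}{dt}E(t)\lesssim \bigl(\|\nabla^2\phi\|_{L^\infty}+\sqrt{\kappa}\lambda\|\nabla^2\ell\|_{L^\infty}\bigr)E(t)\eqqcolon A(t)E(t).
\]
This holds as long as $\|\lambda\ell\|_{L^\infty}$ stays small, which is guaranteed by \eqref{small1} via a bootstrap on the low norm. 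Here we use that the quasilinear terms carry the factors $\tilde a(1+\lambda\ell)-1=O(\lambda\ell)$ and $\tilde g(1+\lambda\ell)-\lambda\ell=O((\lambda\ell)^2)$, and commutator (Kato--Ponce) estimates. The essential point is that all constants must be uniform in $c,\kappa$. This is why the weights $\Lambda$ are built into the norms and why the normalization $\lambda=1/\max\{c,\sqrt\kappa\}$ is used. Gronwall then gives $E(t)\le E(0)\exp\int_0^tA$.

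\textbf{Dispersive estimate.} We prove a frequency-localized $L^1\to L^\infty$ decay estimate for $e^{itp(D)}$, using Littlewood--Paley pieces and stationary phase. Since $p''$ and the Hessian of $p$ are nondegenerate, with curvature comparable to $c$ at low frequency $|\xi|\ll c/\sqrt\kappa$ and to $\kappa^{1/2}$ at high frequency, we obtain a Strichartz estimate
\[
\|e^{itp(D)}f\|_{L^q_tL^\infty_x}\lesssim \kappa^{-\theta_1}c^{-\theta_2}\|\Lambda^{\sigma}f\|_{H^s}
\]
with explicit exponents, valid for $q>2$ when $d=2$ and for $q\ge2$ when $d\ge3$ (the endpoint of Keel--Tao). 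Applying this via Duhamel to $A(t)$, with the nonlinear terms controlled by $E$, and then Hölder in time, $\int_0^TA\lesssim T^{1-1/q}\|A\|_{L^q_T}$, yields
\[
\int_0^T A\,dt\lesssim T^{\frac{q-1}{q}}\,\kappa^{-\frac{q+1}{2q}}c^{-2}\,E(0)\bigl(1+T E_{\max}\cdots\bigr).
\]

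\textbf{Closing the argument.} We run a continuity argument with the hypothesis $\int_0^TA\le 1$. This hypothesis holds as long as $T^{(q-1)/q}\kappa^{-(q+1)/(2q)}c^{-2}E(0)\le\delta'$, which is exactly \eqref{lowbd} after raising to the power $q/(q-1)$. Local existence and continuation follow from \citep{BGDD07} together with its blow-up criterion.

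The main obstacle is the dispersive estimate with sharp, uniform dependence on $(c,\kappa)$. The symbol changes from wave-like to Schrödinger-like at $|\xi|\sim c/\sqrt\kappa$, and the $L^\infty$ norms of the derivatives needed for $A$ must be traded against the $\Lambda$-weights in $E(0)$. The first thing to try is a scaling reduction $x\mapsto (c/\sqrt\kappa)x$, $t\mapsto (c^2/\kappa)t$, which normalizes $p$ to $|\xi|\sqrt{1+|\xi|^2}$; the fixed-symbol Strichartz estimate of \citep{GNT06} can then be rescaled and the resulting powers of $c,\kappa$ tracked.
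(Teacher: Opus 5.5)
\textbf{Verdict: gaps.} Your architecture matches the paper's: gauge-symmetrized energy estimates for $z=\nabla\phi+i\sqrt{\kappa}\lambda\nabla\ell$, a Strichartz estimate for $e^{itH}$ with the $c$-independent constant $\kappa^{-1/(2q)}$, H\"older in time, a bootstrap, and the continuation criterion. However, two steps fail as stated.

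\textbf{First gap: the energy coefficient.} The inequality with $A=\|\nabla^2\phi\|_{L^\infty}+\sqrt{\kappa}\lambda\|\nabla^2\ell\|_{L^\infty}$ is not what the energy identity gives. Applying $\Delta^n$ to the continuity equation produces $(\Delta^n\nabla\phi)\cdot\nabla\ell$. Paired with the $c^2\lambda^2\|\Delta^n\ell\|_{L^2}^2$ part of the energy, it yields the coefficient $c\lambda\|\nabla\ell\|_{L^\infty}$. No multiple of your $A$ bounds this uniformly in $(c,\kappa)$: as $\kappa/c^2\to0$ it is the density Lipschitz norm of the Euler limit, while $\sqrt{\kappa}\lambda\|\nabla^2\ell\|_{L^\infty}\to0$.

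In the paper's treatment one also gets $\frac{c^2}{\sqrt{\kappa}}\lambda\|\ell\|_{L^\infty}$ from the pressure remainder, $\frac{c}{\sqrt{\kappa}}\|\nabla\phi\|_{L^\infty}$, and a cubic coefficient $\lambda\|\nabla\phi\|_{L^\infty}\|\nabla\ell\|_{L^\infty}$ from $\partial_t$ of the gauge weight. Pointwise smallness of $\lambda\ell$ does not rescue such terms. For example, $\int_0^T\frac{c^2}{\sqrt{\kappa}}\lambda\|\ell\|_{L^\infty}\,dt\lesssim\frac{c^2}{\sqrt{\kappa}}\varepsilon T$ only allows $T\sim\sqrt{\kappa}/(c^2\varepsilon)$.

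What you must bootstrap is the full weighted quantity $\mathcal{B}(T)$. It carries weights $\frac{c^2}{\sqrt{\kappa}}(\frac{\sqrt{\kappa}}{c})^j$ on $\|\nabla^j(\lambda\ell)\|_{L^1_TL^\infty_x}$ for $j\le2$, and $\frac{c}{\sqrt{\kappa}}(\frac{\sqrt{\kappa}}{c})^j$ on $\|\nabla^j\nabla\phi\|_{L^1_TL^\infty_x}$ for $j\le1$. These are exactly the weights for which every piece is bounded by Strichartz with the common factor $T^{1-1/q}\kappa^{-(q+1)/(2q)}c^{-2}\mathcal{H}_s(0)$; for instance, $c\lambda\|\nabla\ell\|$ uses the $c^3\sqrt{\kappa}|D|$ part of $(c^2-\kappa\Delta)^2$.

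The cubic term is then absorbed through \eqref{small1}:
$\lambda\int_0^T\|\nabla\phi\|_{L^\infty}\|\nabla\ell\|_{L^\infty}\,dt\le\frac{C}{c}\mathcal{E}_s(T)\,c\lambda\|\nabla\ell\|_{L^1_TL^\infty_x}\le C\varepsilon\mathcal{B}(T)$.
So \eqref{small1} is used for more than keeping $\rho$ near $1$.

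\textbf{Second gap: the Duhamel term.} Your bound carries a factor $(1+TE_{\max}\cdots)$, i.e.\ an extra power of $T$ from integrating a pointwise-in-time bound on the nonlinearity. Closing would then need an additional condition of the form $T^{2-1/q}(\cdots)E(0)^2\lesssim1$. That is not \eqref{lowbd}, and your final step silently drops it.

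The needed estimate is
$\sum_{j\le2}c^4(\frac{\sqrt{\kappa}}{c})^j(\|\nabla^jN_1\|_{L^1_TH^s}+\|\nabla^jN_2\|_{L^1_TH^s})\lesssim\mathcal{B}(T)\mathcal{H}_s(T)$.
To get it, split every product as (weighted $L^1_TL^\infty_x$ factor)$\times$(weighted $L^\infty_TH^s$ factor) via Lemmas \ref{L_prod}--\ref{L_comp2}. You also need the $\kappa^{-1/2}$ gain of $|D|(c^2-\kappa\Delta)^{-1/2}$; this is what makes every term of $N_2$, including the $c^2$-pressure term, fit these weights. Under the bootstrap $\mathcal{B}\le1$, $\mathcal{H}_s\lesssim\mathcal{H}_s(0)$, this gives $\mathcal{B}(T)\le CT^{1-1/q}\kappa^{-(q+1)/(2q)}c^{-2}\mathcal{H}_s(0)$ with no extra $T$.

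\textbf{Smaller points.}
\begin{itemize}
\item The normalizing rescaling is $t\mapsto(c^2/\sqrt{\kappa})t$, not $(c^2/\kappa)t$. With the correct one, the fixed-symbol Strichartz estimate does rescale to the constant $\kappa^{-1/(2q)}$ of Corollary \ref{L_Str2}.
\item At low frequency the radial curvature is $\sim\kappa|\xi|/c$, not $\sim c$. What matters is $|\det\nabla^2H|\gtrsim\kappa^{d/2}$ uniformly.
\item Your first formula for the diagonal unknown is not diagonal; the Riesz-transform version $\Lambda\lambda\ell+iR\cdot\nabla\phi$ is.
\end{itemize}
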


\begin{remk}
	In the smallness condition \eqref{small1} and the lifespan estimate \eqref{lowbd}
	of Theorem \ref{main}, the parameters $c$ and $\kappa$ appear inside the norms
	of the initial perturbation $(\ell_0, \nabla \phi_0)$.
	We first give a sufficient smallness condition stated in terms of the Sobolev norm
	$\|(\ell_0,\nabla\phi_0)\|_{H^{s+1}\times H^s}$.
	Since $\lambda=\max\{c,\sqrt{\kappa}\}^{-1}$,
	it holds that
	\[
	\|\nabla\phi_0\|_{H^s}
	+
	\|\sqrt{c^2-\kappa\Delta}\,(\lambda\ell_0)\|_{H^s}
	\lesssim 
	\|\nabla\phi_0\|_{H^s}
	+
	\|\ell_0\|_{H^{s+1}}.
	\]
Therefore, taking  $\varepsilon$ smaller if necessary, we see that
the size condition \eqref{small1} is ensured by the stronger bound
	\[
	\|\nabla\phi_0\|_{H^s}
	+
	\|\ell_0\|_{H^{s+1}}
	\leqslant
	\varepsilon c.
	\]
We also remark that the admissible size of the initial perturbation in these Sobolev norms
can be taken larger as the inverse Mach number $c$ increases.
	
We next derive from \eqref{lowbd} a lower bound for the lifespan
in which the dependence on the initial perturbation is quantified by the Sobolev norm
$\|(\ell_0,\nabla\phi_0)\|_{H^{s+4}\times H^{s+3}}$.
Since
	\[
	\|(c^2-\kappa\Delta)^{\frac{3}{2}}\nabla\phi_0\|_{H^s}
	+
	\|(c^2-\kappa\Delta)^2(\lambda\ell_0)\|_{H^s}
	\lesssim 
	\max\{c,\sqrt{\kappa}\}^3
	\left(
	\|\nabla\phi_0\|_{H^{s+3}}
	+
	\|\ell_0\|_{H^{s+4}}
	\right),
	\]
it follows from \eqref{lowbd} that
	\begin{align*}
		T
		&\gtrsim
		\kappa^{\frac{1}{2(q-1)}}
		\min\left\{
		\frac{\sqrt{\kappa}}{c},
		\frac{c^2}{\kappa}
		\right\}^{\frac{q}{q-1}}
		\left(
		\|\nabla\phi_0\|_{H^{s+3}}
		+
		\|\ell_0\|_{H^{s+4}}
		\right)^{-\frac{q}{q-1}}
		\\
		&=
		\left(
		\|\nabla\phi_0\|_{H^{s+3}}
		+
		\|\ell_0\|_{H^{s+4}}
		\right)^{-\frac{q}{q-1}}
		\begin{cases}
			\kappa^{\frac{q+1}{2(q-1)}}c^{-\frac{q}{q-1}}
			&\quad (c\geqslant\sqrt{\kappa}),
			\\
			c^{\frac{2q}{q-1}}\kappa^{-\frac{2q-1}{2(q-1)}}
			&\quad (c\leqslant\sqrt{\kappa}).
		\end{cases}
	\end{align*}
Hence, for a fixed initial perturbation $(\ell_0, \nabla \phi_0)$ and $\kappa>1$,
the estimate \eqref{lowbd} yields $T\gg1$ whenever
\[
\kappa^{\frac{1}{2}+\frac{1}{2q}} \gg c \geqslant \sqrt{\kappa}
\quad \text{or}
\quad 
\sqrt{\kappa} \geqslant c \gg \kappa^{\frac{1}{2}-\frac{1}{4q}}.
\]
In particular, suppose that $c=\kappa^\alpha$ for some $\alpha \in \mathbb{R}$.
Then, we have for $\kappa>1$ 
\[
T \gtrsim
(\| \nabla \phi_0 \|_{H^{s+3}}
+ \| \ell_0 \|_{H^{s+4}})^{-\frac{q}{q-1}}
\begin{cases}
	\kappa^{\frac{q+1-2\alpha q}{2(q-1)}}
	&\quad (\alpha \geqslant 1/2),
	\\
	\kappa^{\frac{4\alpha q - (2q-1)}{2(q-1)}}
	&\quad (\alpha \leqslant 1/2).
\end{cases}
\]
It follows that this lower bound tends to infinity as $\kappa\to\infty$ provided that
\[
\frac{1}{2}-\frac{1}{4q}
<
\alpha
<
\frac{1}{2}+\frac{1}{2q}.
\]
In this regime, we also have $c=\kappa^\alpha \to \infty$ as
$\kappa\to\infty$ since $\alpha>0$.
Hence for any fixed initial perturbation
$(\ell_0, \nabla \phi_0) \in H^{s+4}(\mathbb{R}^d) \times H^{s+3}(\mathbb{R}^d)$,
the size condition \eqref{small1} holds for sufficiently large $\kappa$,
and arbitrarily long lifespans are guaranteed as $\kappa \to \infty$.
\end{remk}

We give a brief outline of the proof of Theorem \ref{main}.
Our argument adapts the bootstrap strategy used in \citep{KPTW26} to the present setting.
We combine higher-order energy estimates with the Strichartz estimates
for the linearized system in a bootstrap argument.
The Strichartz estimates yield bounds for $L^1_tL^\infty_x$-type space-time norms 
related to the continuation criterion,
whereas the energy estimates control the higher-order Sobolev norms of the solution.
To track the dependence on the non-dimensional parameters $c$ and $\kappa$,
we introduce the following parameter-weighted higher-order energy
and space-time norms:
\begin{align*}
	\mathcal{H}_s(T)
	&\coloneqq
	\|
	(c^2-\kappa\Delta)^{\frac{3}{2}}\nabla\phi
	\|_{L^\infty_T H^s_x}
	+
	\|
	(c^2-\kappa\Delta)^2(\lambda\ell)
	\|_{L^\infty_T H^s_x},
	\\
	\mathcal{B}(T)
	&\coloneqq
	\frac{c^2}{\sqrt{\kappa}}
	\sum_{j=0}^2
	\left(
	\frac{\sqrt{\kappa}}{c}
	\right)^j
	\|
	\nabla^j(\lambda\ell)
	\|_{L^1_TL^\infty_x}
	+
	\frac{c}{\sqrt{\kappa}}
	\sum_{j=0}^1
	\left(
	\frac{\sqrt{\kappa}}{c}
	\right)^j
	\|
	\nabla^j(\nabla\phi)
	\|_{L^1_TL^\infty_x}.
\end{align*}
For these norms, we establish the estimates
\begin{align*}
	\mathcal{H}_s(T)
	&\lesssim
	\mathcal{H}_s(0)
	\exp\left\{
	C\left(
	\mathcal{B}(T)
	+
	\lambda
	\int_0^T
	\|
	\nabla\phi(\tau)
	\|_{L^\infty}
	\|
	\nabla\ell(\tau)
	\|_{L^\infty}
	\,d\tau
	\right)
	\right\},
	\\
	\mathcal{B}(T)
	&\lesssim
	T^{1-\frac{1}{q}}
	\kappa^{-\frac{q+1}{2q}}
	c^{-2}
	\left\{
	\mathcal{H}_s(0)
	+
	\mathcal{B}(T)\mathcal{H}_s(T)
	\right\}.
\end{align*}
Using the smallness condition \eqref{small1} and the corresponding basic energy estimate, 
we bound the remaining integral term $\lambda
\int_0^T
\|
\nabla\phi(\tau)
\|_{L^\infty}
\|
\nabla\ell(\tau)
\|_{L^\infty}
\,d\tau$
in the exponential by a constant multiple of $\varepsilon\mathcal{B}(T)$.
Then, 
choosing $T$ to be of the order given by the right-hand side
of \eqref{lowbd} closes the bootstrap argument.
Moreover, the smallness condition \eqref{small1},
the corresponding basic energy estimate,
and the Sobolev embedding
$H^s(\mathbb{R}^d)\hookrightarrow L^\infty(\mathbb{R}^d)$
ensure that $\lambda\ell$ remains uniformly small
throughout the bootstrap interval.
Since $\rho=\mathcal{L}^{-1}(1+\lambda\ell)$,
the density stays bounded above and away from zero.
The preceding estimates and density bounds allow us to apply
the continuation criterion established in \citep{BGDD07}
and extend the solution up to the time $T$.

\medskip

This paper is organized as follows.
In Section 2, we reformulate the system \eqref{EK_enp}
as an extended system and derive higher-order energy estimates
using suitable gauge functions.
In Section 3, we recall dispersive and Strichartz estimates
for the linearized Gross--Pitaevskii equation
and apply them to derive an $L^1_tL^\infty_x$-type space-time estimate
for solutions to \eqref{EK_enp}.
In Section 4, we combine these estimates to close the bootstrap argument
and prove Theorem \ref{main}.

Throughout this paper, we use the following notation.
For $T>0$, $1\leqslant p\leqslant\infty$, and a Banach space $X(\mathbb{R}^d)$
of functions on $\mathbb{R}^d$, we set
\[
\|f\|_{L^p_TX_x}
\coloneqq
\|f\|_{L^p(0,T;X(\mathbb{R}^d))}.
\]
In the case $T=\infty$, we abbreviate
$L^p(0,\infty;X(\mathbb{R}^d))$ as $L^p_tX_x$.
The letter $C$ denotes a generic positive constant
whose value may change from line to line.
When its dependence on parameters needs to be specified,
we write $C=C(a,b,\ldots)$ to indicate that $C$ depends only
on the quantities appearing in parentheses.
For nonnegative quantities $A$ and $B$, the notation $A\lesssim B$
means that $A\leqslant CB$ for some constant $C>0$.
We write $A\gtrsim B$ if $B\lesssim A$, and $A\sim B$
if both $A\lesssim B$ and $B\lesssim A$.
All constants denoted by $C$, as well as those implicit in
$\lesssim$, $\gtrsim$, and $\sim$, are independent of
the non-dimensional parameters $c$ and $\kappa$.

\section{Energy Estimates}

In this section, we establish a priori energy estimates
for the solution to the reformulated system \eqref{EK_enp}, with explicit dependence on
the non-dimensional parameters $c$ and $\kappa$.
Following the energy method developed in \citep{BGDD07, AH17},
we formulate an extended system for the complex-valued unknown
$z=\nabla\phi+i\sqrt{\kappa}\lambda\nabla\ell$.
Then, by introducing suitable gauge functions, 
we cancel the derivative-loss contributions from the capillary terms.
The principal point for our purposes is to keep track of the dependence
on $c$ and $\kappa$, since it plays a crucial role in the bootstrap
argument leading to the lifespan bound \eqref{lowbd}.

For $s\in\mathbb{N}$ and $T>0$, we introduce the energy norms
\begin{align*}
	\mathcal{E}_s(T)
	&\coloneqq
	\|\nabla\phi\|_{L^\infty_T H^s_x}
	+
	\| \sqrt{c^2-\kappa\Delta}\,(\lambda\ell) \|_{L^\infty_T H^s_x},
	\\
	\mathcal{H}_s(T)
	&\coloneqq
	\|
	(c^2-\kappa\Delta)^{\frac{3}{2}}\nabla\phi
	\|_{L^\infty_T H^s_x}
	+
	\|
	(c^2-\kappa\Delta)^2(\lambda\ell)
	\|_{L^\infty_T H^s_x},
\end{align*}
and the space-time norm related to the blow-up criterion
\begin{equation*}
	\mathcal{B}(T)
	\coloneqq
	\frac{c^2}{\sqrt{\kappa}}\lambda
	\|\ell\|_{L^1_TL^\infty_x}
	+
	c\lambda
	\|\nabla\ell\|_{L^1_TL^\infty_x}
	+
	\sqrt{\kappa}\lambda
	\|\nabla^2\ell\|_{L^1_TL^\infty_x}
	+
	\frac{c}{\sqrt{\kappa}}
	\|\nabla\phi\|_{L^1_TL^\infty_x}
	+
	\|\nabla^2\phi\|_{L^1_TL^\infty_x}.
\end{equation*}
Here, $\mathcal{E}_s(T)$ denotes the basic energy norm,
whereas $\mathcal{H}_s(T)$ is the higher-order parameter-weighted norm.
We also set 
\begin{align*}
	\mathcal{E}_s(0)
	&\coloneqq
	\|\nabla\phi_0 \|_{H^s}
	+
	\| \sqrt{c^2-\kappa\Delta}\,(\lambda\ell_0) \|_{H^s},
	\\
	\mathcal{H}_s(0)
	&\coloneqq
	\|
	(c^2-\kappa\Delta)^{\frac{3}{2}}\nabla\phi_0
	\|_{H^s}
	+
	\|
	(c^2-\kappa\Delta)^2(\lambda\ell_0)
	\|_{H^s}.
\end{align*}
The following lemma provides energy estimates with explicit dependence
on $c$ and $\kappa$.

\begin{lem}\label{EnergyEst}
	Assume that $s \in \mathbb{N}$ satisfies $s>d/2$.
	Let $(\ell_0, \nabla \phi_0) \in H^{s+4}(\mathbb{R}^d) \times H^{s+3}(\mathbb{R}^d)$,
	and let $(\ell, \nabla \phi)$ be a classical solution to \eqref{EK_enp}
	in the class $C([0,T]; H^{s+4}(\mathbb{R}^d) \times H^{s+3}(\mathbb{R}^d))
	\cap C^1([0,T]; H^{s+2}(\mathbb{R}^d) \times H^{s+1}(\mathbb{R}^d))$ for some $T>0$.
	Suppose that there exist constants $0<m<1<M$ such that
	\[
	m \leqslant \rho(t,x)
	= \mathcal{L}^{-1}(1+\lambda \ell(t,x)) \leqslant M 
	\]
	for $(t,x) \in [0,T] \times \mathbb{R}^d$.
	Then, there exist positive constants $C_1=C_1(d,s,m,M,K)$,
	$C_2=C_2(d,s,m,M,K,P)$, $C_3=C_3(d,s,m,M,K)$ and
	$C_4=C_4(d,s,m,M,K,P)$ such that
	\begin{align}
		\mathcal{E}_s(t)
		&\leqslant C_1 \mathcal{E}_s(0)
		\exp\left\{ C_2 \left(
		\mathcal{B}(t) + \lambda \int_0^t \| \nabla \phi(\tau) \|_{L^\infty} \| \nabla \ell(\tau) \|_{L^\infty} \, d\tau
		\right) \right\},
		\label{EE1} \\
		\mathcal{H}_s(t)
		&\leqslant C_3 \mathcal{H}_s(0)
		\exp\left\{ C_4 \left(
		\mathcal{B}(t) + \lambda \int_0^t \| \nabla \phi(\tau) \|_{L^\infty} \| \nabla \ell(\tau) \|_{L^\infty} \, d\tau
		\right) \right\}
		\label{EE2}
	\end{align}
	for $t \in [0,T]$.
\end{lem}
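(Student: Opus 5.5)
We follow the extended-system and gauge method of \citep{BGDD07, AH17}, keeping the powers of $c$, $\kappa$ and $\lambda$ explicit at every step.

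\emph{Complex formulation.} Set $w\coloneqq\sqrt{\kappa}\lambda\nabla\ell$ and $z\coloneqq\nabla\phi+iw$. Taking the gradient of the first equation of \eqref{EK_enp} and combining it with the second, $z$ satisfies a quasilinear Schr\"odinger-type equation of the form
\[
\partial_t z+(z\cdot\nabla)z+i\nabla\bigl(\tilde a(1+\lambda\ell)\operatorname{div}z\bigr)\sqrt{\kappa}
+c^2\lambda\nabla\ell+c^2\nabla\bigl\{\tilde g(1+\lambda\ell)-\lambda\ell\bigr\}=\text{l.o.t.}
\]
For the lowest-order piece we also keep the $\ell$-equation itself, so that the pair $(c\lambda\ell,z)$ is controlled. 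Using $\tilde g'(1)=1$, the pressure term becomes $c^2\tilde g'(1+\lambda\ell)\lambda\nabla\ell$. Its energy is symmetrized with the weight $\tilde g'(1+\lambda\ell)/\tilde a(1+\lambda\ell)$, which is comparable to $1$ because $m\le\rho\le M$. This gives $\|c\lambda\ell\|_{L^2}^2+\|z\|_{L^2}^2$, which is equivalent to $\|\nabla\phi\|_{L^2}^2+\|\sqrt{c^2-\kappa\Delta}\,\lambda\ell\|_{L^2}^2$.

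\emph{Higher derivatives and gauges.} Apply $\partial^\alpha$ with $|\alpha|\le s$ for \eqref{EE1}. For \eqref{EE2}, apply $\partial^\alpha(c^2-\kappa\Delta)^{3/2}$. We realize this as a finite sum of terms $c^{3-k}\kappa^{k/2}\nabla^k$, $k\le3$ (up to equivalent norms), so the problem reduces to estimating the operators $c^{3-k}\kappa^{k/2}\partial^\beta$ with $|\beta|\le s+k$ on $z$ and on $c\lambda\ell$. The commutator $[\partial^\beta,\tilde a]\,\sqrt\kappa\,\nabla\operatorname{div}z$ loses one derivative. Following \citep{BGDD07}, we remove it by multiplying $\partial^\beta z$ by the gauge $\varphi_\beta=\tilde a(1+\lambda\ell)^{|\beta|}$ (for the $\ell$ component, the corresponding power times the symmetrizer). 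With this choice, the term from $\partial_t\varphi_\beta$ and the bad commutator cancel after integration by parts in the $\operatorname{Re}\langle i\cdots\rangle$ pairing. The remaining terms are estimated with Moser and Kato--Ponce commutator estimates, each being a product of one $L^\infty$ factor and the squared energy.

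\emph{Matching the weights.} The $L^\infty$ factors that appear are $\lambda\|\nabla\ell\|_{L^\infty}$ and $\lambda\|\nabla^2\ell\|_{L^\infty}$ multiplied by $\sqrt\kappa$, together with $\|\nabla z\|_{L^\infty}$ and $c^2\lambda\|\ell\|_{L^\infty}/\sqrt\kappa$. The last comes from converting $c^2\nabla(\tilde g-\lambda\ell)$ at the level of $z$: it costs $c\lambda\ell\cdot(c/\sqrt\kappa)$ against the $w$-energy. There is also $\partial_t\varphi$, which contains $\lambda\partial_t\ell=-\nabla\phi\cdot\lambda\nabla\ell-\tilde a\Delta\phi$. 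This is the source of the product term $\lambda\|\nabla\phi\|_{L^\infty}\|\nabla\ell\|_{L^\infty}$ and of $\|\nabla^2\phi\|_{L^\infty}$. We verify term by term that every coefficient is dominated by the weights in $\mathcal B$. For instance, $c\lambda\|\nabla\ell\|_{L^\infty}$ arises from the mixed terms $c^{3-k}\kappa^{k/2}$, and $c\|\nabla\phi\|_{L^\infty}/\sqrt\kappa$ arises from the Mach term acting on $\nabla\phi$. Throughout we use $\lambda\le c^{-1}$ and $\lambda\le\kappa^{-1/2}$, and $\lambda\|\ell\|_{L^\infty}\lesssim1$ from the density bounds. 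The resulting differential inequality is
\[
\frac{d}{dt}E\le C\bigl(\text{integrand of }\mathcal B+\lambda\|\nabla\phi\|_{L^\infty}\|\nabla\ell\|_{L^\infty}\bigr)E .
\]
Gronwall then gives \eqref{EE1} and \eqref{EE2}, with the constants $C_1,C_3$ coming from the norm equivalence induced by the gauges and the symmetrizer.

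\emph{Main obstacle.} The hardest step is the parameter bookkeeping in \eqref{EE2}. The commutators of $(c^2-\kappa\Delta)^{3/2}$ with the nonlinear coefficients must produce only $L^\infty$ factors carrying exactly the weights of $\mathcal B$, and never, for example, $\kappa\lambda\|\nabla^2\ell\|_{L^\infty}$ without a compensating $c^{-1}$ or $\kappa^{-1/2}$. We will handle this by working with the homogeneous pieces $c^{3-k}\kappa^{k/2}\nabla^k$ and distributing derivatives in Leibniz's rule. A derivative that falls on a coefficient converts one factor $\sqrt\kappa\nabla$ into $\sqrt\kappa\lambda\nabla\ell$ or $\sqrt\kappa\lambda\nabla^2\ell$ (placed in $L^\infty$), while the remaining factor is absorbed by a lower-$k$ piece of the energy times $c/\sqrt\kappa$ or $\sqrt\kappa/c$, consistent with the weights $(\sqrt\kappa/c)^j$ in $\mathcal B$.
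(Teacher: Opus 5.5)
Your proposal has a genuine gap at the step that carries the lemma: the cancellation of the derivative-loss terms. First, the loss does not come only from $[\partial^\beta,\tilde a]\sqrt{\kappa}\nabla\operatorname{div}z$. The capillary convection term $i(w\cdot\nabla)z$ inside your $(z\cdot\nabla)z$ also loses a derivative, already at order zero, because $\operatorname{Re}\int i(w\cdot\nabla)z\cdot\bar z\,dx=-\operatorname{Im}\int(w\cdot\nabla)z\cdot\bar z\,dx$ is not of lower order. This is why the natural energy is $\int\rho|z|^2\,dx$, not $\|z\|_{L^2}^2$. Second, the mechanism you describe cannot work. $\partial_t\varphi_\beta$ only multiplies $\partial^\beta z$ by a bounded real function, so it carries neither an extra derivative of $z$ nor a factor $i$. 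In the paper it merely cancels the transport contribution $-\frac12\int(\operatorname{div}u)|\varphi_n\Delta^nz|^2\,dx$, up to a term bounded by $\|\operatorname{div}u\|_{L^\infty}\|\varphi_n\Delta^nz\|_{L^2}^2$.

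The actual cancellation is purely spatial. Take $Z=\Delta^nz$ and leave aside the harmless transport term and $i\sqrt{\kappa}\nabla(a\operatorname{div}(\varphi Z))$. The remaining terms with $2n+1$ derivatives on $z$ are $i\sqrt{\kappa}$ times $\varphi(\lambda\nabla\ell\cdot\nabla)Z+2n\varphi(\nabla a\cdot\nabla)Z-a(\nabla\varphi\cdot\nabla)Z-(\nabla\varphi)a\operatorname{div}Z$. Writing every coefficient through $\nabla\rho$ by means of $\lambda\nabla\ell=\frac{a}{\rho}\nabla\rho$, this sum takes the skew form $\varphi^{-1}\{(\nabla Z)\cdot\nabla G(\rho)-(\operatorname{div}Z)\nabla G(\rho)\}$ precisely when $\varphi'/\varphi=\frac{1}{2\rho}+n\frac{a'}{a}$. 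In that case $\operatorname{Re}\bigl[i\sqrt{\kappa}\int\{(\nabla Z)\cdot\nabla G-(\operatorname{div}Z)\nabla G\}\cdot\bar Z\,dx\bigr]=0$ by \citep[Lemma 3.1]{BGDD07}. Hence the gauge at order $k$ must be $a^{k/2}\sqrt{\rho}$ (the paper's $\varphi_n=a^n\sqrt{\rho}$ and $\psi_n=a^{(2n+1)/2}\sqrt{\rho}$), not $\tilde a^{|\beta|}$. With your choice ($a^{2n}$ at order $2n$) the residual $a^{2n}\{\frac{a}{\rho}(\nabla\rho\cdot\nabla)Z-2na'(\nabla\rho)\operatorname{div}Z\}$ survives. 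It is not skew, and at the top order ($s$ for \eqref{EE1}, $s+3$ for \eqref{EE2}) it cannot be absorbed.

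The same structure explains why the paper works with $\Delta^n$, and with $\partial_j\Delta^n$ summed over $j$, rather than with a single $\partial^\beta$. It uses $\nabla\Delta^{n-1}\operatorname{div}z=\Delta^nz$, plus one extra integration by parts of $\psi_n(\partial_ja)\Delta^{n+1}z$ that turns $2n$ into $2n+1$. For a general $\partial^\beta$, the principal commutator $\sum_j\beta_j(\partial_ja)\,\partial^{\beta-e_j}\Delta z$ is not of the form $|\beta|(\nabla a\cdot\nabla)\partial^\beta z$.

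A second, smaller problem is your pressure symmetrizer. We have $\frac{\tilde g'(1+\lambda\ell)}{\tilde a(1+\lambda\ell)}=\frac{P'(\rho)}{\rho K(\rho)}$, and only $P'(1)=1$ is assumed while $m,M$ are arbitrary. So this weight need not be comparable to $1$: it may vanish or change sign on $[m,M]$. Your energy then need not be coercive, and $C_1,C_3$ would depend on $P$, contrary to the statement. The paper symmetrizes only the linear coupling, pairing the $\ell$-equation with $c^2\lambda^2\frac{\varphi_n^2}{a}\Delta^n\ell$, so that only $a>0$ is used. It treats $c^2\lambda\{\tilde g'(1+\lambda\ell)-1\}\nabla\ell$ perturbatively via $|\tilde g'(1+\lambda\ell)-1|\lesssim\lambda|\ell|$. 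This is where the weight $\frac{c^2}{\sqrt{\kappa}}\lambda\|\ell\|_{L^\infty}$ in $\mathcal{B}$ comes from, and your own weight-matching paragraph in fact assumes this treatment.

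Also be careful in the bookkeeping. A factor $\sqrt{\kappa}\lambda\|\nabla\ell\|_{L^\infty}$ on its own is not dominated by $\mathcal{B}$ when $\sqrt{\kappa}\gg c$. In the paper such factors occur only squared, through $\nabla^2a$, as $\sqrt{\kappa}\lambda^2\|\nabla\ell\|_{L^\infty}^2\lesssim\sqrt{\kappa}\lambda\|\nabla^2\ell\|_{L^\infty}$. This uses $\|\nabla\ell\|_{L^\infty}^2\lesssim\|\ell\|_{L^\infty}\|\nabla^2\ell\|_{L^\infty}$ and $\lambda\|\ell\|_{L^\infty}\lesssim1$.

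Finally, the obstacle you single out for \eqref{EE2} does not arise. Suppose the homogeneous bound for $\|\nabla\phi\|_{\dot H^n}^2+\kappa\lambda^2\|\nabla\ell\|_{\dot H^n}^2+c^2\lambda^2\|\ell\|_{\dot H^n}^2$ holds for every $n\leqslant s+3$ with the same Gronwall exponent. Then one multiplies the order-$n$ bound by $c^6$ and the order-$(n+3)$ bound by $\kappa^3$, and uses $c^6+\kappa^3|\xi|^6\sim(c^2+\kappa|\xi|^2)^3$. No commutator with $(c^2-\kappa\Delta)^{3/2}$, and no redistribution of $\sqrt{\kappa}\nabla$ factors between levels, ever occurs.
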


\subsection{An Extended System}
The proof of Lemma \ref{EnergyEst} relies on an extended formulation
of the system \eqref{EK_enp}.
We first specify the vector-calculus notation used throughout this paper.
For vector fields $U,V\colon\mathbb{R}^d\to\mathbb{C}^d$, we set
\[
\left((\nabla U)\cdot V\right)_j
\coloneqq
\partial_j U\cdot V
=
\sum_{k=1}^d\partial_j U_k V_k
\]
for $j=1,2,\dots, d$.
With this convention, we have
\[
\nabla(U\cdot V)
=
(\nabla U)\cdot V
+
(\nabla V)\cdot U.
\]
Moreover, if $\operatorname{curl}V
\coloneqq \left(
\partial_k V_j - \partial_j V_k
\right)_{1\leqslant j,k \leqslant d}
=0$, it holds that
\[
(U\cdot\nabla)V=(\nabla V)\cdot U,
\quad
\nabla\operatorname{div}V=\Delta V.
\]

We now set
\[
u\coloneqq\nabla\phi,
\quad
w\coloneqq\nabla\ell,
\quad
z\coloneqq u+i\sqrt{\kappa}\lambda w.
\]
Note that $\operatorname{curl} u = \operatorname{curl} w=0$.
Rewriting the system \eqref{EK_enp} in terms of $u$ and $w$, and
applying $\nabla$ to the equation for $\ell$, we have
the following extended system for $(\ell, u, w)$:
 \begin{equation*}
 	\begin{cases}
 			\partial_t \ell + u \cdot w + \dfrac{1}{\lambda} a(\rho) \operatorname{div} u = 0,
 			\\
 			\partial_t u + u\cdot \nabla u 
 			- \kappa \lambda^2 w \cdot \nabla w
 			- \kappa \lambda \nabla (a(\rho) \operatorname{div} w)
 			+ c^2 \nabla (\tilde{g}(1+\lambda \ell))
 			=  0,
 			\\
 			\partial_t w + u \cdot \nabla w + w\cdot \nabla u + \dfrac{1}{\lambda} \nabla (a(\rho) \operatorname{div} u) = 0,
 	\end{cases}
 \end{equation*}
 where $\rho=\mathcal{L}^{-1}(1+\lambda\ell)$.
 Equivalently, multiplying the equation for $w$ by $i\sqrt{\kappa}\lambda$ and adding
 the resulting equation to the equation for $u$, we obtain
 \begin{equation}\label{Eq_z}
 	\begin{cases}
 		\partial_t \ell + u \cdot w + \dfrac{1}{\lambda} a(\rho) \operatorname{div} u = 0,
 		\\
 		\partial_t z + u \cdot \nabla z + i\sqrt{\kappa}\lambda w \cdot \nabla z
 		+ i \sqrt{\kappa} \nabla (a(\rho) \operatorname{div} z)
 		+ c^2 \lambda w
 		+ c^2 \lambda
 		\left\{
 		\tilde{g}^\prime(1+\lambda \ell)  -1
 		\right\} w =0.
 	\end{cases}
 \end{equation}
 
\subsection{Auxiliary Lemmas}
 
Before deriving the energy estimates \eqref{EE1} and \eqref{EE2}, we prepare several
product, commutator, and composition estimates that will be used in the
proof of Lemma \ref{EnergyEst} and throughout Sections 3 and 4.
We state them in the forms needed for the subsequent analysis.

The first auxiliary estimate concerns products of derivatives
(see \citep{KM81,Tay23}).
\begin{lem}\label{L_prod}
	Let $k \in \mathbb{N} \cup \{0\}$,
	and let $\alpha, \beta \in (\mathbb{N} \cup \{0\})^d$ satisfy
	$|\alpha|+|\beta|=k$.
	Then, there exists a positive constant $C=C(d,\alpha,\beta,k)$ such that
	\[
	\| (\partial^\alpha f) (\partial^\beta g) \|_{L^2}
	\leqslant C
	\left(
	\| f \|_{L^\infty} \| g \|_{\dot{H}^k}
	+ \| g \|_{L^\infty} \| f \|_{\dot{H}^k}
	\right)
	\]
	for $f, g \in L^\infty(\mathbb{R}^d) \cap H^k(\mathbb{R}^d)$.
\end{lem}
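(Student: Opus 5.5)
The plan is the classical argument: Hölder's inequality followed by Gagliardo--Nirenberg interpolation between $L^\infty$ and $\dot{H}^k$, closed with Young's inequality.

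First I dispose of the degenerate cases. If $k=0$, then $\alpha=\beta=0$ and $\|fg\|_{L^2}\leqslant \|f\|_{L^\infty}\|g\|_{L^2}$ is trivial, since $\dot H^0=L^2$. If $\beta=0$ and $|\alpha|=k$, then $\|(\partial^\alpha f) g\|_{L^2}\leqslant \|g\|_{L^\infty}\|\partial^\alpha f\|_{L^2}\leqslant \|g\|_{L^\infty}\|f\|_{\dot H^k}$. The case $\alpha=0$ is symmetric.

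In the remaining case $k\geqslant 2$ and $1\leqslant |\alpha|,|\beta|\leqslant k-1$. I set
\[
p=\frac{2k}{|\alpha|},\quad r=\frac{2k}{|\beta|},\quad \frac1p+\frac1r=\frac12,
\]
and Hölder's inequality gives $\|(\partial^\alpha f)(\partial^\beta g)\|_{L^2}\leqslant \|\partial^\alpha f\|_{L^p}\|\partial^\beta g\|_{L^r}$.

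The key ingredient is the Gagliardo--Nirenberg inequality
\[
\|\nabla^j f\|_{L^{2k/j}}\leqslant C\|f\|_{L^\infty}^{1-\frac jk}\|\nabla^k f\|_{L^2}^{\frac jk},\qquad 0<j<k.
\]
This is the main (though standard) obstacle, and I would cite it from \citep{KM81,Tay23}. It is the endpoint case with $L^\infty$ on one side, and the exponents are consistent with scaling: $-\frac{d|\alpha|}{2k}+|\alpha|$ on the left matches $\frac{|\alpha|}{k}\bigl(k-\frac d2\bigr)$ on the right.

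To prove it directly, I would induct on $j$ via integration by parts:
\[
\int|\nabla^j f|^{p}\,dx
=-\int \nabla^{j-1}f\cdot \nabla\cdot\bigl(|\nabla^jf|^{p-2}\nabla^jf\bigr)\,dx .
\]
Hölder's inequality then gives the three-term convexity
\[
\|\nabla^j f\|_{L^{2k/j}}^2\lesssim \|\nabla^{j-1}f\|_{L^{2k/(j-1)}}\|\nabla^{j+1}f\|_{L^{2k/(j+1)}},
\]
which iterates to the claim, i.e.\ the log-convexity of $j\mapsto\|\nabla^jf\|_{L^{2k/j}}$. The functions are a priori only in $L^\infty\cap H^k$, so I would justify the integrations by parts first for Schwartz functions and then extend by density. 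Alternatively, a Littlewood--Paley proof avoids this issue: bound the low frequencies by the $L^\infty$ norm, the high frequencies by the $\dot H^k$ norm, and optimize the splitting.

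Applying this inequality with $j=|\alpha|$ and $j=|\beta|$, and writing $\theta=|\alpha|/k$, gives
\[
\|(\partial^\alpha f)(\partial^\beta g)\|_{L^2}\lesssim \bigl(\|f\|_{L^\infty}\|g\|_{\dot H^k}\bigr)^{1-\theta}\bigl(\|g\|_{L^\infty}\|f\|_{\dot H^k}\bigr)^{\theta}.
\]
Young's inequality $a^{1-\theta}b^\theta\leqslant a+b$ then yields the claim, with a constant depending only on $d,\alpha,\beta,k$.
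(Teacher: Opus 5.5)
Your proposal is correct and is essentially the standard argument behind \citep{KM81,Tay23}, which the paper cites in place of a proof: H\"older with exponents $2k/|\alpha|$ and $2k/|\beta|$, the Gagliardo--Nirenberg inequality $\|\nabla^j f\|_{L^{2k/j}}\lesssim\|f\|_{L^\infty}^{1-j/k}\|\nabla^k f\|_{L^2}^{j/k}$, and Young's inequality; your integration-by-parts derivation of the Gagliardo--Nirenberg step is sound, since it can be run on mollified $f$ and then passed to the limit, and $2k/j>2$ makes $|v|^{p-2}v$ of class $C^1$. The only loose point is your Littlewood--Paley alternative: there the low/high splitting must be optimized pointwise in $x$ (via square and maximal functions), because a single global cutoff cannot bound the low-frequency part in $L^{2k/j}$ by $\|f\|_{L^\infty}$; your main route does not depend on it.
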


We next recall an estimate for the commutator of $\partial^\alpha$
with multiplication by $f$ (see \citep{KM81,KP88}).

\begin{lem}\label{L_comm}
	Let $k \in \mathbb{N}$, and let $\alpha \in (\mathbb{N} \cup \{0\})^d$
	satisfy $|\alpha|=k$.
	Then, there exists a positive constant $C=C(d, k)$ such that
	\[
	\| \partial^\alpha(fg) - f \partial^\alpha g \|_{L^2}
	\leqslant
	C\left(
	\| \nabla f \|_{L^\infty}\| g \|_{\dot{H}^{k-1}} + \| g \|_{L^\infty}\| f \|_{\dot{H}^k}
	\right)
	\]
	for 
	$f \in H^k(\mathbb{R}^d)$ with $\nabla f \in L^\infty(\mathbb{R}^d)$ and
	$g\in L^\infty(\mathbb{R}^d) \cap H^k(\mathbb{R}^d)$.
\end{lem}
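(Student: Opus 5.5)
The commutator estimate in Lemma \ref{L_comm} is the classical Kato--Ponce estimate. I would prove it by expanding with the Leibniz rule and then using Gagliardo--Nirenberg interpolation, exactly as one proves Lemma \ref{L_prod}. I would in fact deduce it from Lemma \ref{L_prod}. The Leibniz formula gives
\[
\partial^\alpha(fg)-f\partial^\alpha g=\sum_{0<\beta\leqslant\alpha}\binom{\alpha}{\beta}(\partial^\beta f)(\partial^{\alpha-\beta}g).
\]
In every term at least one derivative falls on $f$. I would therefore pick an index $j$ with $\beta_j\geqslant1$ and write $\partial^\beta f=\partial^{\beta-e_j}(\partial_j f)$. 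Each term then has the form $(\partial^{\beta'}F)(\partial^{\gamma}g)$, where $F=\partial_j f$, $|\beta'|+|\gamma|=k-1$, and the case $k-1\geqslant 0$ is allowed.

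Next I apply Lemma \ref{L_prod} with $k-1$ in place of $k$ to the pair $(F,g)$. This gives
\[
\|(\partial^{\beta'}F)(\partial^\gamma g)\|_{L^2}\leqslant C\bigl(\|\partial_jf\|_{L^\infty}\|g\|_{\dot H^{k-1}}+\|g\|_{L^\infty}\|\partial_jf\|_{\dot H^{k-1}}\bigr),
\]
and $\|\partial_j f\|_{\dot H^{k-1}}\leqslant\|f\|_{\dot H^k}$. Summing over the finitely many $\beta$, with constants depending only on $d$ and $k$, yields the claim.

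The hypotheses of Lemma \ref{L_prod} must be checked: it requires $F,g\in L^\infty\cap H^{k-1}$. We have $F=\partial_jf\in L^\infty$ by assumption and $F\in H^{k-1}$ since $f\in H^k$. We also have $g\in L^\infty\cap H^k\subset L^\infty\cap H^{k-1}$. When $k=1$ the sum consists only of $(\partial_jf)g$, and the bound $\|\partial_j f\|_{L^\infty}\|g\|_{L^2}$ is immediate, consistent with $\dot H^0=L^2$.

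The only real content lies in Lemma \ref{L_prod}, which is taken as given. If one wanted to prove it, the key step would be the Gagliardo--Nirenberg inequality $\|\partial^\beta h\|_{L^{2k/|\beta|}}\lesssim\|h\|_{L^\infty}^{1-|\beta|/k}\|h\|_{\dot H^k}^{|\beta|/k}$. Combined with Hölder's inequality and Young's inequality, this gives the product bound. I therefore expect no serious obstacle beyond the bookkeeping of shifting one derivative onto $f$.
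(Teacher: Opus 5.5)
Your proof is correct, and it is the standard argument for this integer-order commutator estimate. You expand by the Leibniz rule and move one derivative onto $f$, so each term becomes $(\partial^{\beta'}\partial_j f)(\partial^{\gamma}g)$ with $|\beta'|+|\gamma|=k-1$; applying Lemma \ref{L_prod} at order $k-1$ to the pair $(\partial_j f,g)$ then gives the bound. The paper does not prove this lemma itself but quotes it from the references it cites, so there is no separate route in the paper to compare against.
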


Finally, we give two composition estimates in the Sobolev space
(see \citep{AG07, BGDD07}).
For $k\in\mathbb{N}$, we use the notation
$D^k f
\coloneqq
\bigl(\partial^\alpha f\bigr)_{|\alpha|=k}$.

\begin{lem}\label{L_comp1}
	Let $I\subset\mathbb{R}$ be an open interval, and let
	$J\Subset I$ be an interval such that $0 \in J$.
	Let $s\in\mathbb{N}$.
	Suppose that $F\in C^s(I)$ satisfies $F(0)=0$.
	Assume that $v\in H^s(\mathbb{R}^d)\cap L^\infty(\mathbb{R}^d)$ has values in $J$.
	Then, there exists a positive constant $C=C(d,s,I,J)$ such that
	\[
	\|F(v)\|_{H^s}
	\leqslant
	C
	\bigl(1+\|v\|_{L^\infty}\bigr)^{s-1}
	\|F'\|_{W^{s-1,\infty}(J)}
	\|v\|_{H^s}.
	\]
\end{lem}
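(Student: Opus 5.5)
The plan is to prove Lemma \ref{L_comp1} by induction on derivatives via the Fa\`a di Bruno formula, reducing everything to Gagliardo--Nirenberg interpolation. Since $F(0)=0$ and $v$ takes values in $J$, the mean value theorem gives $|F(v)|\leqslant \|F'\|_{L^\infty(J)}|v|$ pointwise. Hence
\[
\|F(v)\|_{L^2}\leqslant \|F'\|_{L^\infty(J)}\|v\|_{L^2},
\]
which handles the $L^2$ part of the $H^s$ norm. It therefore suffices to bound $\|\partial^\alpha F(v)\|_{L^2}$ for $1\leqslant|\alpha|=k\leqslant s$.

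For such $\alpha$, the chain rule expresses $\partial^\alpha F(v)$ as a finite sum, with combinatorial coefficients depending only on $\alpha$, of terms
\[
F^{(j)}(v)\,\partial^{\beta_1}v\cdots\partial^{\beta_j}v,
\qquad 1\leqslant j\leqslant k,\quad \beta_1+\cdots+\beta_j=\alpha,\quad |\beta_i|\geqslant1.
\]
Each factor $F^{(j)}(v)$ is bounded in $L^\infty$ by $\|F'\|_{W^{s-1,\infty}(J)}$, because $j-1\leqslant s-1$ and $v\in J$. The remaining task is to estimate the product of derivatives in $L^2$.

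For this I would use H\"older's inequality with exponents $p_i=2k/|\beta_i|$, so that $\sum 1/p_i=1/2$. Each factor is then controlled by the Gagliardo--Nirenberg inequality
\[
\|\partial^{\beta_i}v\|_{L^{2k/|\beta_i|}}\lesssim \|v\|_{L^\infty}^{1-|\beta_i|/k}\|v\|_{\dot H^k}^{|\beta_i|/k}.
\]
Multiplying these bounds gives $\|v\|_{L^\infty}^{j-1}\|v\|_{\dot H^k}$. Summing over $j$, and bounding $\|v\|_{L^\infty}^{j-1}\leqslant(1+\|v\|_{L^\infty})^{s-1}$ together with $\|v\|_{\dot H^k}\leqslant\|v\|_{H^s}$, yields the claimed estimate. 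Lemma \ref{L_prod} is the $j=2$ instance of the same interpolation, and the general $j$ is handled identically or by iterating it.

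I expect the main obstacle to be justifying the Gagliardo--Nirenberg step for the endpoint $L^\infty$--$\dot H^k$ interpolation, which holds in all dimensions for $0<|\beta_i|<k$; the cases $|\beta_i|=k$, $j=1$ are trivial. A second point is keeping the constants depending only on $(d,s,I,J)$. Since the derivatives of $F$ enter only through $\sup_J$ norms, there is no dependence on $v$ beyond $\|v\|_{L^\infty}$ and $\|v\|_{H^s}$; the interval $I$ appears only so that $F^{(j)}$ is defined and bounded on $J\Subset I$. A density argument with smooth approximations of $v$, which preserve values in a slightly larger compact subinterval of $I$, justifies the chain rule for $v\in H^s\cap L^\infty$.
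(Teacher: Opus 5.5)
Your proposal is correct. The mean-value bound for the $L^2$ part, the Fa\`a di Bruno expansion, H\"older's inequality with exponents $2k/|\beta_i|$, and the Gagliardo--Nirenberg inequality $\|D^m v\|_{L^{2k/m}}\lesssim\|v\|_{L^\infty}^{1-m/k}\|D^k v\|_{L^2}^{m/k}$ together give exactly the stated bound.

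The paper gives no proof of its own and defers to \citep{AG07, BGDD07}, where the standard argument is essentially this one. A small simplification: mollification already keeps the values of $v$ in $\overline{J}\subset I$, so your density step needs no enlargement of $J$.
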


\begin{lem}\label{L_comp2}
	Let $I\subset\mathbb{R}$ be an open interval,
	and let $J\Subset I$ be an interval.
	Let $k\in\mathbb{N}$ and $s\in\mathbb{N}\cup\{0\}$.
	Suppose that $F\in C^{s+k}(I)$ and that $v \in L^\infty(\mathbb{R}^d)$ has values in $J$
	and satisfies $D^k v \in H^s(\mathbb{R}^d)$.
	Then, there exists a positive constant $C=C(d,s,k,I,J)$ such that
	\[
	\bigl\|D^k(F(v))\bigr\|_{H^s}
	\leqslant
	C
	\bigl(1+\|v\|_{L^\infty}\bigr)^{s+k-1}
	\|F'\|_{W^{s+k-1,\infty}(J)}
	\|D^k v\|_{H^s}.
	\]
\end{lem}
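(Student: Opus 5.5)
The approach is to expand the derivatives with the Faà di Bruno formula and then estimate each resulting product of derivatives of $v$ by Hölder's inequality combined with Gagliardo--Nirenberg interpolation between $L^\infty$ and a top-order homogeneous Sobolev norm. The only difficulty is that $v$ and its derivatives of order $<k$ need not lie in $L^2$. So I will never put a low-order derivative of $v$ into $L^2$, only into some $L^p$ controlled by interpolation.

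\textbf{Step 1 (reduction).} It suffices to bound $\|\partial^\gamma(F(v))\|_{L^2}$ for every multi-index $\gamma$ with $n\coloneqq|\gamma|\in\{k,\dots,k+s\}$. Since $\|D^n v\|_{L^2}\leqslant \|D^k v\|_{H^s}$ for such $n$, it is enough to prove
\[
\|\partial^\gamma(F(v))\|_{L^2}\leqslant C\bigl(1+\|v\|_{L^\infty}\bigr)^{n-1}\|F'\|_{W^{n-1,\infty}(J)}\|D^n v\|_{L^2}.
\]
Indeed, $n-1\leqslant s+k-1$ and $1+\|v\|_{L^\infty}\geqslant1$, so this bound can be summed over $n$ to give the statement.

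\textbf{Step 2 (Faà di Bruno).} For smooth $v$, write
\[
\partial^\gamma F(v)=\sum_{m=1}^{n}\sum c_{m,\gamma_1,\dots,\gamma_m}\,F^{(m)}(v)\prod_{i=1}^m\partial^{\gamma_i}v,
\]
where the inner sum runs over decompositions $\gamma=\gamma_1+\dots+\gamma_m$ with $|\gamma_i|\geqslant1$. Because $v$ takes values in $J$ and $m\leqslant n\leqslant s+k$, we have $|F^{(m)}(v)|\leqslant\|F'\|_{W^{n-1,\infty}(J)}$ pointwise. The assumption $F\in C^{s+k}(I)$ with $J\Subset I$ makes this finite.

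\textbf{Step 3 (interpolation).} Set $p_i\coloneqq 2n/|\gamma_i|$, so that $\sum_i 1/p_i=1/2$, and apply Hölder's inequality to the product. Then use the Gagliardo--Nirenberg inequality
\[
\|\partial^{\gamma_i}v\|_{L^{p_i}}\leqslant C\|v\|_{L^\infty}^{1-|\gamma_i|/n}\|D^n v\|_{L^2}^{|\gamma_i|/n}.
\]
Multiplying over $i$ gives
\[
\Bigl\|\prod_{i=1}^m\partial^{\gamma_i}v\Bigr\|_{L^2}\leqslant C\|v\|_{L^\infty}^{m-1}\|D^nv\|_{L^2}\leqslant C\bigl(1+\|v\|_{L^\infty}\bigr)^{n-1}\|D^nv\|_{L^2}.
\]
Summing over the finitely many terms yields Step 1. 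The constant depends on $d,s,k$, and on $I,J$ only through the fact that $v$ takes values in $J$.

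\textbf{Main obstacle: justifying the interpolation without $L^2$ control of $v$.} The inequality in Step 3 has to hold for $v\in L^\infty$ with only $D^n v\in L^2$. Nirenberg's homogeneous form of Gagliardo--Nirenberg covers exactly this situation: it needs $v\in L^\infty$ and $D^n v\in L^2$, with no condition on $v$ itself in $L^2$. If a self-contained argument is preferred, I would first prove it for Schwartz functions. I would then pass to general $v$ by mollification $v_\epsilon=\eta_\epsilon*v$, which preserves the $L^\infty$ bound, keeps the values of $v_\epsilon$ in the convex hull of $J$, and satisfies $D^nv_\epsilon\to D^nv$ in $L^2$. Lower-order derivatives are handled by the same inequality applied to differences, and smoothness of $v$ is removed by the same limiting argument in the Faà di Bruno identity. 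Alternatively, one can induct on $n$ and integrate by parts, which gives $\|\nabla^j v\|_{L^{2n/j}}^2\lesssim\|v\|_{L^\infty}\|\nabla^{j-1}v\|_{L^{2n/(j-1)}}\cdots$ in the classical way.
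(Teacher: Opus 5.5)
The paper gives no proof of this lemma and simply refers to \citep{AG07, BGDD07}. Your argument is the standard proof of such Moser-type composition bounds and is correct: Fa\`a di Bruno, H\"older with exponents $2n/|\gamma_i|$, and the Gagliardo--Nirenberg bound $\|\partial^{\gamma_i}v\|_{L^{2n/|\gamma_i|}}\lesssim\|v\|_{L^\infty}^{1-|\gamma_i|/n}\|D^nv\|_{L^2}^{|\gamma_i|/n}$, which Nirenberg's theorem does supply for $v\in L^\infty$ with only $D^nv\in L^2$ because $|\gamma_i|\geqslant1$.

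Drop your optional self-contained density sketch and rely on the cited theorem instead. The function $\eta_\epsilon*v$ is not Schwartz, and truncating it by a cutoff $\chi_R$ introduces the term $(D^n\chi_R)(\eta_\epsilon*v)$. Its $L^2$ norm is of order $R^{d/2-n}\|v\|_{L^\infty}$ (e.g.\ for constant $v$), which does not tend to zero when $n\leqslant d/2$.
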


\subsection{Proof of Lemma \ref{EnergyEst}}

The proof of Lemma \ref{EnergyEst} follows the arguments in \citep{BGDD07, AH17}.
The key ingredient is the use of suitably chosen gauge functions to cancel the
higher-order derivative terms that cause a loss of derivatives.
At the same time,
the present analysis requires precise control of the dependence on 
the non-dimensional parameters $c$ and $\kappa$.
We therefore provide a detailed proof, keeping track of the dependence
on $c$ and $\kappa$ throughout.

\begin{proof}[Proof of Lemma \ref{EnergyEst}]
We first consider derivatives of even order and derive the
$\dot{H}^{2n}$-estimate for $z$ with $n \in \mathbb{N}\cup \{0\}$.
To obtain the cancellation of the higher-order derivative terms, we
introduce the gauge function
\[
\varphi_n=\varphi_n(\rho)
\coloneqq
a(\rho)^n\sqrt{\rho}.
\]
Applying $\varphi_n\Delta^n$ to the equation for $z$ in
\eqref{Eq_z}, we derive an equation for $\varphi_n\Delta^n z$.
The main point is the expansion of the capillary term $i \sqrt{\kappa} \nabla (a \operatorname{div} z)$.
Since $z$ is curl-free, we have
$\nabla\operatorname{div}\Delta^{n-1}z = \Delta^n z$.
Then, the Leibniz rule yields
\[
\Delta^n(a\operatorname{div}z)
=
a\operatorname{div}\Delta^n z
+ 2n(\nabla a)\cdot\Delta^n z + R,
\]
where
\[
R
\coloneqq
\sum_{\substack{|\alpha|+|\beta|=2n \\ |\alpha|\geqslant2}}
C_{\alpha,\beta}
(\partial^\alpha a)
(\partial^\beta\operatorname{div}z).
\]
Hence we have
\begin{align*}
	i \sqrt{\kappa} \varphi_n \Delta^n \nabla (a \operatorname{div} z) 
	&= i \sqrt{\kappa}  \nabla ( a\operatorname{div} (\varphi_n \Delta^n z))
	\\
	&\quad + i \sqrt{\kappa}
	\left\{
	2n \varphi_n (\nabla a \cdot \nabla) \Delta^n z
	- a (\nabla \varphi_n \cdot \nabla) \Delta^n z
	- (\nabla \varphi_n) a \operatorname{div} \Delta^n z
	\right\}
	\\
	&\quad 
	- i \sqrt{\kappa}
	\left\{
	(\nabla a) (\nabla \varphi_n \cdot \Delta^n z)
	+ a (\Delta^n z \cdot \nabla) \nabla \varphi_n
	-2n \varphi_n (\Delta^n z \cdot \nabla) \nabla a
	\right\}
	\\
	&\quad + i \sqrt{\kappa} \varphi_n \nabla R.
\end{align*}
Also, the term $\varphi_n\Delta^n \left\{
(i\sqrt{\kappa}\lambda w\cdot\nabla)z
\right\}$ can be decomposed as
\begin{equation*}
	\varphi_n \Delta^n((i \sqrt{\kappa} \lambda w \cdot \nabla)z)
	= i\sqrt{\kappa} \varphi_n (\lambda w \cdot \nabla) \Delta^n z
	+ \varphi_n [\Delta^n, i\sqrt{\kappa} \lambda w\cdot \nabla] z.
\end{equation*}
Combining these identities, we see that $\varphi_n\Delta^n z$ satisfies
\begin{align}\label{Eq_2nz}
	&\partial_t (\varphi_n \Delta^n z)
	+ (u\cdot \nabla)(\varphi_n \Delta^n z)
	+  i \sqrt{\kappa}  \nabla ( a\operatorname{div} (\varphi_n \Delta^n z))
	+ c^2 \lambda \varphi_n \Delta^n w
	\notag \\
	&\quad 
	+ i \sqrt{\kappa}
	\left\{
	\varphi_n (\lambda w \cdot \nabla) \Delta^n z
	+ 2n \varphi_n (\nabla a \cdot \nabla) \Delta^n z
	- a (\nabla \varphi_n \cdot \nabla) \Delta^n z
	- (\nabla \varphi_n) a \operatorname{div} \Delta^n z
	\right\}
	\notag \\
	&\quad 
	+ \sum_{k=1}^6 \mathcal{N}_k = 0.
\end{align}
Here,
\begin{align*}
	\mathcal{N}_1 &\coloneqq  - (\partial_t \varphi_n + u \cdot \nabla \varphi_n) \Delta^n z,
	\notag \\
	\mathcal{N}_2 &\coloneqq \varphi_n [\Delta^n, u \cdot \nabla] z,
	\notag \\
	\mathcal{N}_3 &\coloneqq \varphi_n [\Delta^n, i\sqrt{\kappa} \lambda w\cdot \nabla] z,
	\notag \\
	\mathcal{N}_4 &\coloneqq  - i \sqrt{\kappa}
	\left\{
	(\nabla a) (\nabla \varphi_n \cdot \Delta^n z)
	+ a (\Delta^n z \cdot \nabla) \nabla \varphi_n
	-2n \varphi_n (\Delta^n z \cdot \nabla) \nabla a
	\right\},
	\notag \\
	\mathcal{N}_5 &\coloneqq i \sqrt{\kappa} \varphi_n \nabla R,
	\notag \\
	\mathcal{N}_6 &\coloneqq c^2 \lambda \varphi_n \Delta^n\left\{ (\tilde{g}^\prime(1+\lambda \ell) - 1)w \right\}.
\end{align*}
Taking the $L^2$-inner product of \eqref{Eq_2nz} with $\varphi_n\Delta^n z$
and then taking the real part, we have, after integration by parts,
\begin{align}\label{H2nz1}
	&\frac{1}{2} \frac{d}{dt}
	\| \varphi_n \Delta^n z \|_{L^2}^2
	- \frac{1}{2} \int_{\mathbb{R}^d}
	(\operatorname{div} u) |\varphi_n \Delta^n z|^2 \, dx
	+ c^2 \lambda \int_{\mathbb{R}^d} \varphi_n^2 \Delta^n w \cdot \Delta^n u \, dx
	\notag \\
	&\quad 
	+ \operatorname{Re}
	\bigg[
	i \sqrt{\kappa}
	\int_{\mathbb{R}^d}
	\big\{
	\varphi_n (\lambda w \cdot \nabla) \Delta^n z
	+ 2n \varphi_n (\nabla a \cdot \nabla) \Delta^n z
	\notag \\
	&\qquad\qquad\qquad\qquad
	- a (\nabla \varphi_n \cdot \nabla) \Delta^n z
	- (\nabla \varphi_n) a \operatorname{div} \Delta^n z
	\big\}
	\cdot \varphi_n \overline{\Delta^n z} \, dx
	\bigg]
	\notag \\
	&\quad = - \operatorname{Re}
	\left[
	\sum_{k=1}^6 \int_{\mathbb{R}^d} \mathcal{N}_k \cdot \varphi_n \overline{\Delta^n z} \, dx
	\right].
\end{align}
We now verify that the choice of $\varphi_n$ cancels the term in
\eqref{H2nz1} involving derivatives of $z$ of order $2n+1$.
Indeed, since $\lambda w=\frac{a}{\rho}\nabla\rho$, it follows that
\begin{align*}
	&\varphi_n (\lambda w \cdot \nabla) \Delta^n z
	+ 2n \varphi_n (\nabla a \cdot \nabla) \Delta^n z
	- a (\nabla \varphi_n \cdot \nabla) \Delta^n z
	- (\nabla \varphi_n) a \operatorname{div} \Delta^n z
	\\
	&\quad 
	= \varphi_n
	\left\{
	\left(
	\frac{a(\rho)}{\rho} + 2n a^\prime(\rho) - \frac{a  \varphi_n^\prime(\rho)}{\varphi_n(\rho)}
	\right) \nabla \rho \cdot \nabla \Delta^n z
	- \frac{a \varphi_n^\prime(\rho)}{\varphi_n(\rho)} (\nabla \rho) \operatorname{div} \Delta^n z
	\right\}
	\\
	&\quad 
	= \varphi_n
	\left\{
	\frac{a  \varphi_n^\prime(\rho)}{\varphi_n(\rho)} \nabla \rho \cdot \nabla \Delta^n z
	- \frac{a \varphi_n^\prime(\rho)}{\varphi_n(\rho)} (\nabla \rho) \operatorname{div} \Delta^n z
	\right\}
	\\
	&\quad 
	= \frac{1}{\varphi_n}
	\left\{
	(\nabla \Delta^n z) \cdot \nabla G(\rho)
	- (\operatorname{div} \Delta^n z) \nabla G(\rho)
	\right\},
\end{align*}
where $G(\rho) \coloneqq \int_1^{\rho}
a(\tau) \varphi_n^\prime(\tau) \varphi_n(\tau) 
\, d\tau$.
Since $\nabla G(\rho)$ is curl-free, applying
\citep[Lemma 3.1]{BGDD07} with
$Z= \Delta^n z$ and $W= \nabla G(\rho)$, we obtain
\begin{align}\label{ho}
	&\operatorname{Re}
	\bigg[
	i \sqrt{\kappa}
	\int_{\mathbb{R}^d}
	\big\{
	\varphi_n (\lambda w \cdot \nabla) \Delta^n z
	+ 2n \varphi_n (\nabla a \cdot \nabla) \Delta^n z
	\notag \\
	&\qquad\qquad\qquad
	- a (\nabla \varphi_n \cdot \nabla) \Delta^n z
	- (\nabla \varphi_n) a \operatorname{div} \Delta^n z
	\big\}
	\cdot \varphi_n \overline{\Delta^n z} \, dx
	\bigg]
	\notag \\
	&\quad =
	\operatorname{Re}
	\left[
	i \sqrt{\kappa}
	\int_{\mathbb{R}^d}
	\left\{
	(\nabla \Delta^n z) \cdot \nabla G(\rho)
	- (\operatorname{div} \Delta^n z) \nabla G(\rho)
	\right\}
	\cdot \overline{\Delta^n z} \, dx
	\right]
	\notag \\
	&\quad = - \operatorname{Im}
	\left[
	\sqrt{\kappa}
	\int_{\mathbb{R}^d}
	\left\{
	(\nabla \Delta^n z) \cdot \nabla G(\rho)
	- (\operatorname{div} \Delta^n z) \nabla G(\rho)
	\right\}
	\cdot \overline{\Delta^n z} \, dx
	\right]
	=0.
\end{align}
Therefore, the terms containing derivatives of order $2n+1$ vanish in the energy identity.
We next estimate the terms involving $\mathcal{N}_1,\ldots,\mathcal{N}_6$ on the right-hand side of \eqref{H2nz1}.
For $\mathcal{N}_1$, by the continuity equation and the definition of $\varphi_n$, we have
\begin{equation*}
	(\partial_t+u\cdot\nabla)\varphi_n
	=
	-\varphi_{n}^\prime \rho \operatorname{div}u
	=
	-\frac{\varphi_n}{2}
	\left(
	1+\frac{2n\rho a^\prime}{a}
	\right)
	\operatorname{div}u.
\end{equation*}
This gives 
\begin{equation}\label{N1-1}
	-\operatorname{Re}
	\int_{\mathbb{R}^d}
	\mathcal{N}_1\cdot\varphi_n\overline{\Delta^n z} \, dx
	=
	-\frac{1}{2}
	\int_{\mathbb{R}^d}
	(\operatorname{div}u)
	|\varphi_n\Delta^n z|^2\,dx
	-
	n\int_{\mathbb{R}^d}
	\frac{\rho a^\prime}{a}
	(\operatorname{div}u)
	|\varphi_n\Delta^n z|^2\,dx.
\end{equation}
The first term on the right-hand side cancels the transport term
$
-\frac{1}{2}
\int_{\mathbb{R}^d}
(\operatorname{div}u)
|\varphi_n\Delta^n z|^2\,dx
$
on the left-hand side of \eqref{H2nz1}.
Moreover, since $m\leqslant\rho\leqslant M$, we have
\begin{equation}\label{N1-2}
	\left|
	n\int_{\mathbb{R}^d}
	\frac{\rho a^\prime(\rho)}{a(\rho)}
	(\operatorname{div}u)
	|\varphi_n\Delta^n z|^2\,dx
	\right|
	\leqslant
	C(n,m,M,K)
	\|\operatorname{div}u\|_{L^\infty}
	\|\varphi_n\Delta^n z\|_{L^2}^2.
\end{equation}
Concerning $\mathcal{N}_2$, it follows from Lemma \ref{L_comm} that
\begin{align*}
	\left\|
	\left[ \Delta^n, \, u \cdot \nabla \right] z
	\right\|_{L^2}
	&\leqslant \sum_{|\alpha|=n} \frac{n!}{\alpha!}
	\left\|
	\partial^{2\alpha} (u \cdot \nabla z) - u \cdot \nabla \partial^{2\alpha} z
	\right\|_{L^2}
	\\
	&\leqslant
	C(d,n) 
	\left(
	\| \nabla u \|_{L^\infty} \| z \|_{\dot{H}^{2n}} + \| \nabla z \|_{L^\infty} \| u \|_{\dot{H}^{2n}}
	\right).
\end{align*}
Hence we have
\begin{align}\label{N2}
	&\left|
	\operatorname{Re}
	\int_{\mathbb{R}^d} 
	\mathcal{N}_2 
	\cdot \varphi_n \overline{\Delta^n z} \, dx
	\right|
	\notag \\
	&\quad \leqslant
	C(d,n,m,M,K)
	\left(
	\| \nabla u \|_{L^\infty} \| z \|_{\dot{H}^{2n}} + \| \nabla z \|_{L^\infty} \| u \|_{\dot{H}^{2n}}
	\right)
	\| \varphi_n \Delta^n z \|_{L^2}.
\end{align}
Similarly, Lemma \ref{L_comm} yields
\begin{align}\label{N3}
	&\left|
	\operatorname{Re}
	\int_{\mathbb{R}^d}
	\mathcal{N}_3\cdot \varphi_n \overline{\Delta^n z}\,dx
	\right|
	\notag \\
	&\quad
	\leqslant
	C(d,n,m,M,K)\sqrt{\kappa}\lambda
	\left(
	\|\nabla w\|_{L^\infty}\|z\|_{\dot{H}^{2n}}
	+
	\|\nabla z\|_{L^\infty}\|w\|_{\dot{H}^{2n}}
	\right)
	\|\varphi_n\Delta^n z\|_{L^2}.
\end{align}
For $\mathcal{N}_4$, we first observe that
\[
\int_{\mathbb{R}^d} (\nabla a)(\Delta^n z \cdot \nabla \varphi_n) \cdot \varphi_n \overline{\Delta^n z} \, dx
= \sum_{j,k=1}^d 
\int_{\mathbb{R}^d}
( \varphi_n a^\prime \varphi_n^\prime \partial_j \rho \partial_k \rho )
\Delta^n z_k \overline{\Delta^n z_j} \, dx.
\]
Since the matrix $( \varphi_n a^\prime \varphi_n^\prime \partial_j \rho \partial_k \rho )_{1\leqslant j,k \leqslant d}$
is real-symmetric, we have
\[
\operatorname{Re} 
\left[
- i \sqrt{\kappa} \int_{\mathbb{R}^d} (\nabla a)(\Delta^n z \cdot \nabla \varphi_n) 
\cdot \varphi_n \overline{\Delta^n z} \, dx
\right] = 0.
\]
The same argument applies to the remaining two terms in $\mathcal{N}_4$, and hence
\begin{equation}\label{N4}
	\operatorname{Re}
	\int_{\mathbb{R}^d} 
	\mathcal{N}_4
	\cdot \varphi_n \overline{\Delta^n z} \, dx = 0.
\end{equation}
As for $\mathcal{N}_5$, by the definition of $R$, we see that
\[
\nabla R
=
\sum_{\substack{|\alpha|+|\beta|=2n\\|\alpha|\geqslant2}}
C_{\alpha,\beta}
\left\{
(\partial^\alpha\nabla a)
(\partial^\beta\operatorname{div}z)
+
(\partial^\alpha a)
(\partial^\beta\nabla\operatorname{div}z)
\right\}.
\]
Hence, Lemma \ref{L_prod} gives
\begin{align*}
	\|\nabla R\|_{L^2}
	\leqslant
	C(d,n)
	\left(
	\|\nabla^2a\|_{L^\infty}
	\|\operatorname{div}z\|_{\dot{H}^{2n-1}}
	+
	\|\operatorname{div}z\|_{L^\infty}
	\|\nabla^2a\|_{\dot{H}^{2n-1}}
	\right).
\end{align*}
Note that $a=\tilde{a}(1+\lambda\ell)$ and
$\lambda \ell = \mathcal{L}(\rho)-1$.
The assumption $m\leqslant\rho\leqslant M$ implies 
\[
|\lambda \ell(t,x)| \leqslant \max\left\{ 1- \mathcal{L}(m), \mathcal{L}(M)-1 \right\}
\]
for $(t,x) \in [0,T] \times \mathbb{R}^d$.
Therefore, Lemma \ref{L_comp2} yields
\[
\|\nabla^2a\|_{\dot{H}^{2n-1}}
\leqslant
C(d,n,m,M,K)\lambda\|w\|_{\dot{H}^{2n}}.
\]
Moreover, the chain rule and the inequality
$
\|\nabla\ell\|_{L^\infty}^2
\lesssim
\|\ell\|_{L^\infty}
\|\nabla^2\ell\|_{L^\infty}
$
give
\begin{align*}
	\| \nabla^2 a \|_{L^\infty}
	&\leqslant
	C(m,M,K)
	\left(
	\lambda^2 \| \nabla \ell \|_{L^\infty}^2 + \lambda\| \nabla^2 \ell \|_{L^\infty}
	\right)
	\\
	&\leqslant
	C(m,M,K) \lambda \| \nabla w \|_{L^\infty}.
\end{align*}
Thus, it follows that
\begin{align}\label{N5}
	&\left|
	\operatorname{Re} 
	\int_{\mathbb{R}^d} 
	\mathcal{N}_5 \cdot \varphi_n \overline{\Delta^n z} \, dx
	\right|
	\notag \\
	&\quad \leqslant
	C(d,n,m,M,K)
	\sqrt{\kappa} \lambda
	\left(
	\| \nabla w \|_{L^\infty} \| z \|_{\dot{H}^{2n}}
	+ \| \operatorname{div} z \|_{L^\infty} \| w \|_{\dot{H}^{2n}}
	\right)
	\| \varphi_n \Delta^n z \|_{L^2}.
\end{align}
Finally, we estimate $\mathcal{N}_6$. By Lemma \ref{L_prod}, we have
\begin{align*}
	&\left|
	\operatorname{Re}
	\int_{\mathbb{R}^d}
	\mathcal{N}_6
	\cdot \varphi_n  \overline{\Delta^n z} \, dx
	\right|
	\\
	&\quad =
	\left|
	\int_{\mathbb{R}^d}
	c^2 \lambda \varphi_n \Delta^n\left\{ (\tilde{g}^\prime(1+\lambda \ell) - 1)w \right\}
	\cdot \varphi_n \Delta^n u \, dx
	\right|
	\\
	&\quad \leqslant 
	C(d,n,m,M,K)
	c^2 \lambda
	\big(
	\| w \|_{L^\infty}
	\| \tilde{g}^\prime(1+\lambda \ell) - 1 \|_{\dot{H}^{2n}}
	+ \| \tilde{g}^\prime(1+\lambda \ell) - 1 \|_{L^\infty}
	\| w \|_{\dot{H}^{2n}}
	\big)
	\| \varphi_n \Delta^n u \|_{L^2}.
\end{align*}
Since $\tilde{g}^\prime(1)=1$, the mean value theorem implies
\[
\left\|
\tilde{g}^\prime(1+\lambda\ell)-1
\right\|_{L^\infty}
\leqslant
C(m,M,K,P)\lambda\|\ell\|_{L^\infty}.
\]
Moreover, Lemma \ref{L_comp2} gives
\[
\left\|
\tilde{g}^\prime(1+\lambda\ell)-1
\right\|_{\dot{H}^{2n}}
\leqslant
C(d,n,m,M,K,P)
\lambda\|\ell\|_{\dot{H}^{2n}}.
\]
Hence we have
\begin{align}\label{N6}
	&\left|
	\operatorname{Re} \int_{\mathbb{R}^d}
	\mathcal{N}_6 \cdot \overline{\varphi_n \Delta^n z} \, dx
	\right|
	\notag \\
	&\quad \leqslant
	C(d,n,m,M,K,P)
	c^2 \lambda^2
	\left(
	\| \ell \|_{L^\infty} \| w \|_{\dot{H}^{2n}}
	+ \| w \|_{L^\infty} \| \ell \|_{\dot{H}^{2n}}
	\right) \| \varphi_n \Delta^n u \|_{L^2}.
\end{align}
Therefore, by \eqref{H2nz1}--\eqref{N6}, we obtain
\begin{align}\label{H2nz2}
	&\frac{1}{2} \frac{d}{dt}
	\| \varphi_n \Delta^n z \|_{L^2}^2
	+ c^2 \lambda \int_{\mathbb{R}^d} \varphi_n^2 \Delta^n w \cdot \Delta^n u \, dx
	\notag \\
	&\quad 
	\leqslant C(d,n,m,M,K,P)
	\big\{
	\left(
	\| \nabla u \|_{L^\infty} \| z \|_{\dot{H}^{2n}} + \| \nabla z \|_{L^\infty} \| u \|_{\dot{H}^{2n}}
	\right)
	\| \varphi_n \Delta^n z \|_{L^2}
	\notag \\
	&\qquad\qquad
	+ \sqrt{\kappa} \lambda 
	\left(
	\| \nabla w \|_{L^\infty} \| z \|_{\dot{H}^{2n}} + \| \nabla z \|_{L^\infty} \| w \|_{\dot{H}^{2n}}
	\right)
	\| \varphi_n \Delta^n z \|_{L^2}
	\notag \\
	&\qquad\qquad 
	+ c^2 \lambda^2
	\left(
	\| \ell \|_{L^\infty} \| w \|_{\dot{H}^{2n}}
	+ \| w \|_{L^\infty} \| \ell \|_{\dot{H}^{2n}}
	\right) \| \varphi_n \Delta^n u \|_{L^2}
	\big\}
	\notag \\
	&\quad \leqslant 
	C(d,n,m,M,K,P)
	\big\{
	\| \nabla z \|_{L^\infty}
	\| z \|_{\dot{H}^{2n}}\| \varphi_n \Delta^n z \|_{L^2}
	\notag \\
	&\qquad\qquad 
	+ c^2 \lambda^2
	\left(
	\| \ell \|_{L^\infty} \| w \|_{\dot{H}^{2n}}
	+ \| w \|_{L^\infty} \| \ell \|_{\dot{H}^{2n}}
	\right) \| \varphi_n \Delta^n u \|_{L^2}
	\big\}.
\end{align}

To cancel the quadratic term
$c^2\lambda
\int_{\mathbb{R}^d}\varphi_n^2 \Delta^n w\cdot \Delta^n u \, dx$
in \eqref{H2nz2}, we derive a weighted $\dot{H}^{2n}$-estimate for $\ell$ from the first equation in \eqref{Eq_z}.
Applying $\Delta^n$ to this equation, taking the $L^2$-inner product
of the resulting equation with
$c^2\lambda^2\frac{\varphi_n^2}{a}\Delta^n\ell$, 
we have
\[
\int_{\mathbb{R}^d}
\left\{
c^2\lambda^2 \frac{\varphi_n^2 \Delta^n \ell}{a}  \partial_t \Delta^n \ell
+ c^2\lambda^2 \frac{\varphi_n^2 \Delta^n \ell}{a} \Delta^n (u \cdot w)
+ c^2\lambda  \dfrac{\varphi_n^2 \Delta^n \ell}{a} \Delta^n (a \operatorname{div} u)
\right\} \, dx =0.
\]
For the third term in the left-hand side, 
using the decomposition
\[
\Delta^n (a \operatorname{div} u)=a \operatorname{div} \Delta^n u
+ \left[
\Delta^n, \, a
\right] \operatorname{div} u
\]
and integrating by parts, we have
\begin{align*}
	c^2\lambda
	\int_{\mathbb{R}^d}
	\frac{\varphi_n^2\Delta^n\ell}{a}
	\Delta^n(a\operatorname{div}u)\,dx
	&=
	-c^2\lambda
	\int_{\mathbb{R}^d}
	\varphi_n^2\Delta^n w\cdot\Delta^n u\,dx
	-c^2\lambda
	\int_{\mathbb{R}^d}
	\Delta^n\ell\,\nabla(\varphi_n^2)\cdot\Delta^n u\,dx
	\\
	&\quad
	+c^2\lambda
	\int_{\mathbb{R}^d}
	\frac{\varphi_n^2\Delta^n\ell}{a}
	[\Delta^n,a]\operatorname{div}u\,dx.
\end{align*}
Hence it holds that
\begin{align*}
	&\frac{c^2\lambda^2}{2} \frac{d}{dt}
	\left\|
	\frac{\varphi_n}{\sqrt{a}} \Delta^n \ell 
	\right\|_{L^2}^2
	-c^2\lambda
	\int_{\mathbb{R}^d}
	\varphi_n^2\Delta^n w\cdot\Delta^n u\,dx
	\\
	&\quad = - c^2 \lambda^2 \int_{\mathbb{R}^d} 
	\frac{\varphi_n^2 \Delta^n \ell}{a} \Delta^n (u \cdot w) \, dx
	- c^2\lambda
	\int_{\mathbb{R}^d}
	\frac{\varphi_n^2\Delta^n\ell}{a}
	[\Delta^n,a]\operatorname{div}u\,dx
	\\
	&\qquad 
	+ c^2\lambda
	\int_{\mathbb{R}^d}
	\Delta^n\ell\,\nabla(\varphi_n^2)\cdot\Delta^n u\,dx
	+ \frac{c^2\lambda^2}{2}
	\int_{\mathbb{R}^d} 
	\partial_t\left(
	\frac{\varphi_n^2}{a} 
	\right)
	|\Delta^n \ell|^2 \, dx.
\end{align*}
We estimate the four terms on the right-hand side separately.
By Lemma \ref{L_prod}, the first term is bounded by
\begin{align*}
	&\left|
	c^2\lambda^2
	\int_{\mathbb{R}^d}
	\frac{\varphi_n^2\Delta^n\ell}{a}
	\Delta^n(u\cdot w)\,dx
	\right|
	\\
	&\quad\leqslant
	C(d,n,m,M,K)c^2\lambda^2
	\left(
	\|u\|_{L^\infty}\|w\|_{\dot{H}^{2n}}
	+
	\|w\|_{L^\infty}\|u\|_{\dot{H}^{2n}}
	\right)
	\|\Delta^n\ell\|_{L^2}.
\end{align*}
For the second term, the Leibniz rule and Lemma \ref{L_prod} give
\begin{equation*}
	\|[\Delta^n,a]\operatorname{div}u\|_{L^2}
	\leqslant C(d,n)
	\left(
	\|\nabla a\|_{L^\infty}
	\|\operatorname{div}u\|_{\dot{H}^{2n-1}}
	+
	\|\operatorname{div}u\|_{L^\infty}
	\|\nabla a\|_{\dot{H}^{2n-1}}
	\right).
\end{equation*}
Since $a=\tilde{a}(1+\lambda\ell)$, the chain rule and
Lemma \ref{L_comp2} yield
\[
\|\nabla a\|_{L^\infty}
\leqslant
C(m,M,K)\lambda\|w\|_{L^\infty},
\quad 
\|\nabla a\|_{\dot{H}^{2n-1}}
\leqslant
C(d,n,m,M,K)\lambda\|\ell\|_{\dot{H}^{2n}}.
\]
Therefore, we have
\begin{align*}
	&\left|
	c^2 \lambda 
	\int_{\mathbb{R}^d}
	\dfrac{\varphi_n^2 \Delta^n \ell}{a} 
	\left[
	\Delta^n, \, a
	\right] \operatorname{div} u 
	\, dx
	\right|
	\\
	&\quad \leqslant C(d,n,m,M,K) 
	c^2 \lambda^2
	\left(
	\| w \|_{L^\infty}\| \operatorname{div} u \|_{\dot{H}^{2n-1}}
	+ \| \operatorname{div} u \|_{L^\infty}
	\| \ell \|_{\dot{H}^{2n}}
	\right)
	\| \Delta^n \ell \|_{L^2}.
\end{align*}
For the third term,
since $\nabla(\varphi_n^2)
=
2\lambda
\varphi_n\varphi_n^\prime
\frac{\rho}{a}w$, 
it holds that
\begin{equation*}
	\left|
	c^2\lambda
	\int_{\mathbb{R}^d}
	\Delta^n\ell\,
	\nabla(\varphi_n^2)\cdot\Delta^n u\,dx
	\right|
	\leqslant
	C(n,m,M,K)c^2\lambda^2
	\|w\|_{L^\infty}
	\|\Delta^n\ell\|_{L^2}
	\|\Delta^n u\|_{L^2}.
\end{equation*}
Finally, since
\[
\partial_t\left(\frac{\varphi_n^2}{a}\right)
=
- \left( \frac{\varphi_n^2}{a} \right)^{\!\prime}
\left(
\rho\operatorname{div}u+u\cdot\nabla\rho
\right),
\quad 
\nabla\rho = \lambda\frac{\rho}{a}w,
\]
we have
\begin{align*}
	&\left|
	\frac{c^2\lambda^2}{2}
	\int_{\mathbb{R}^d} 
	\partial_t\left(
	\frac{\varphi_n^2}{a} 
	\right)
	|\Delta^n \ell|^2 \, dx
	\right|
	\\
	&\quad \leqslant 
	C(n,m,M,K)
	\left\{
	c^2 \lambda^2 
	\| \operatorname{div} u \|_{L^\infty}
	\| \Delta^n \ell \|_{L^2}^2
	+ c^2 \lambda^3
	\| u \|_{L^\infty} \| w \|_{L^\infty}
	\| \Delta^n \ell \|_{L^2}^2
	\right\}.
\end{align*}
Combining these estimates gives
\begin{align}\label{H2nl}
	&\frac{c^2\lambda^2}{2} \frac{d}{dt}
	\left\|
	\frac{\varphi_n}{\sqrt{a}} \Delta^n \ell 
	\right\|_{L^2}^2
	-c^2\lambda
	\int_{\mathbb{R}^d}
	\varphi_n^2\Delta^n w\cdot\Delta^n u\,dx
	\notag \\
	&\quad \leqslant C(d,n,m,M,K)
	\big\{
	c^2 \lambda^3
	\| u \|_{L^\infty} \| w \|_{L^\infty}
	\| \Delta^n \ell \|_{L^2}^2
	\notag \\
	&\qquad + 
	c^2 \lambda^2
	\big(
	\| u \|_{L^\infty} \| w \|_{\dot{H}^{2n}}
	+ \| w \|_{L^\infty} \| u \|_{\dot{H}^{2n}}
	+ \| \operatorname{div} u \|_{L^\infty}
	\| \ell \|_{\dot{H}^{2n}}
	\big)\| \Delta^n \ell \|_{L^2}
	\big\}.
\end{align}

Therefore, adding \eqref{H2nz2} and \eqref{H2nl}, we obtain
\begin{align}\label{H2nest}
	&\frac{1}{2} \frac{d}{dt}
	\| \varphi_n \Delta^n z \|_{L^2}^2
	+  \frac{c^2\lambda^2}{2} \frac{d}{dt}
	\left\|
	\frac{\varphi_n}{\sqrt{a}} \Delta^n \ell 
	\right\|_{L^2}^2
	\notag \\
	&\quad \leqslant C(d,n,m,M,K,P) 
	\big\{ 
	\| \nabla z \|_{L^\infty}
	\| z \|_{\dot{H}^{2n}}
	\| \varphi_n \Delta^n z \|_{L^2}
	\notag \\
	&\qquad\quad 
	+ c^2 \lambda^2
	\left(
	\| \ell \|_{L^\infty} \| w \|_{\dot{H}^{2n}}
	+ \| w \|_{L^\infty} \| \ell \|_{\dot{H}^{2n}}
	\right) \| \varphi_n \Delta^n u \|_{L^2}
	\notag \\
	&\qquad\quad 
	+ c^2 \lambda^2
	\left(
	\| u \|_{L^\infty} \| w \|_{\dot{H}^{2n}}
	+ \| w \|_{L^\infty} \| u \|_{\dot{H}^{2n}}
	+ \| \operatorname{div} u \|_{L^\infty}
	\| \ell \|_{\dot{H}^{2n}}
	\right)\| \Delta^n \ell \|_{L^2}
	\notag \\
	&\qquad\quad 
	+ c^2 \lambda^3
	\| u \|_{L^\infty} \| w \|_{L^\infty}
	\| \Delta^n \ell \|_{L^2}^2
	\big\}.
\end{align}
Here, we define
 \begin{align*}
 	\dot{E}_{2n}(t)
 	&\coloneqq 
 	\| \varphi_n \Delta^n z(t) \|_{L^2}^2
 	+  c^2\lambda^2
 	\left\|
 	\frac{\varphi_n}{\sqrt{a}} \Delta^n \ell(t)
 	\right\|_{L^2}^2
 	\\
 	&= \| \varphi_n \Delta^n (\nabla \phi(t)) \|_{L^2}^2
 	+ \kappa \lambda^2
 	 \| \varphi_n \Delta^n (\nabla \ell(t)) \|_{L^2}^2
 	 + c^2 \lambda^2
 	\left\|
 	\frac{\varphi_n}{\sqrt{a}} \Delta^n \ell(t)
 	\right\|_{L^2}^2.
 \end{align*}
Since $m\leqslant\rho\leqslant M$ and $a(\rho)>0$, it holds that
\begin{equation}\label{E_equiv}
	\dot{E}_{2n}(t)
	\sim
	\|\nabla\phi(t)\|_{\dot{H}^{2n}}^2
	+\kappa\lambda^2\|\nabla\ell(t)\|_{\dot{H}^{2n}}^2
	+c^2\lambda^2\|\ell(t)\|_{\dot{H}^{2n}}^2.
\end{equation}
Here, the implicit constant depends on $d,n,m,M$ and $K$.
We estimate the terms on the right-hand side of \eqref{H2nest},
keeping the dependence on $c$ and $\kappa$ explicit:
\begin{align*}
	\| \nabla z \|_{L^\infty} \| z \|_{\dot{H}^{2n}} \| \varphi_n \Delta^n z \|_{L^2}
	&\lesssim 
	\left(
	\| \nabla^2 \phi \|_{L^\infty} + \sqrt{\kappa} \lambda \| \nabla^2 \ell \|_{L^\infty}
	\right) \dot{E}_{2n}(t),
	\\
	c^2 \lambda^2 \| \ell \|_{L^\infty} \| w \|_{\dot{H}^{2n}} \| \varphi_n \Delta^n u \|_{L^2}
	&\lesssim \frac{c^2}{\sqrt{\kappa}} \lambda \| \ell \|_{L^\infty} \dot{E}_{2n}(t),
	\\
	c^2\lambda^2 \| w \|_{L^\infty} \| \ell \|_{\dot{H}^{2n}} \| \varphi_n \Delta^n u \|_{L^2}
	&\lesssim c\lambda \| \nabla \ell \|_{L^\infty} \dot{E}_{2n}(t),
	\\
	c^2 \lambda^2 \|u \|_{L^\infty} \| w \|_{\dot{H}^{2n}} \| \Delta^n \ell \|_{L^2}
	&\lesssim \frac{c}{\sqrt{\kappa}} \| \nabla \phi \|_{L^\infty} \dot{E}_{2n}(t),
	\\
	c^2 \lambda^2
	\| w \|_{L^\infty} \| u \|_{\dot{H}^{2n}} \| \Delta^n \ell \|_{L^2}
	&\lesssim c \lambda \| \nabla \ell \|_{L^\infty} \dot{E}_{2n}(t),
	\\
	c^2 \lambda^2 \| \operatorname{div} u \|_{L^\infty}
	\| \ell \|_{\dot{H}^{2n}} \| \Delta^n \ell \|_{L^2}
	&\lesssim \| \nabla^2 \phi \|_{L^\infty} \dot{E}_{2n}(t),
	\\
	c^2 \lambda^3 
	\| u \|_{L^\infty} \| w \|_{L^\infty} \| \Delta^n \ell \|_{L^2}^2
	&\lesssim 
	\lambda
	\| \nabla \phi \|_{L^\infty} \| \nabla \ell \|_{L^\infty} \dot{E}_{2n}(t).
\end{align*}
Here, the implicit constants in the preceding estimates depend only on
$d,n,m,M$, and $K$.
Hence it follows from \eqref{H2nest} that
\begin{align*}
	\frac{1}{2} \frac{d}{dt} \dot{E}_{2n}(t)
	&\leqslant C^\prime 
	\bigg\{
	\left(
	\frac{c^2}{\sqrt{\kappa}} \lambda \| \ell \|_{L^\infty}
	+ c \lambda \| \nabla \ell \|_{L^\infty}
	+ \sqrt{\kappa} \lambda \| \nabla^2 \ell \|_{L^\infty}
	\right)
	\\
	&\qquad\quad  +
	\left(
	\frac{c}{\sqrt{\kappa}} \| \nabla \phi \|_{L^\infty}
	+ \| \nabla^2 \phi \|_{L^\infty}
	\right)
	+ \lambda \| \nabla \phi \|_{L^\infty} \| \nabla \ell \|_{L^\infty}
	\bigg\} \dot{E}_{2n}(t)
\end{align*}
with $C^\prime=C^\prime(d,n,m,M,K,P)>0$.
Applying the Gronwall inequality and using the equivalence \eqref{E_equiv},
we obtain the $\dot{H}^{2n}$-estimate
 \begin{align}\label{H2nest2}
 	&\|\nabla\phi(t)\|_{\dot{H}^{2n}}^2
 	+\kappa\lambda^2\|\nabla\ell(t)\|_{\dot{H}^{2n}}^2
 	+c^2\lambda^2\|\ell(t)\|_{\dot{H}^{2n}}^2
 	\notag \\
 	&\quad \leqslant
 	C\left(
 	\|\nabla\phi_0\|_{\dot{H}^{2n}}^2
 	+\kappa\lambda^2\|\nabla\ell_0\|_{\dot{H}^{2n}}^2
 	+c^2\lambda^2\|\ell_0\|_{\dot{H}^{2n}}^2
 	\right)
 	\exp\left\{ C^\prime \left(
 	\mathcal{B}(t) + \lambda \int_0^t \| \nabla \phi(\tau) \|_{L^\infty} \| \nabla \ell(\tau) \|_{L^\infty} \, d\tau
 	\right) \right\}
 \end{align}
for $0 \leqslant t \leqslant T$ with $C=C(d,n,m,M,K)>0$.

We next consider derivatives of odd order and derive the $\dot{H}^{2n+1}$-estimate for $z$.
The proof proceeds in essentially the same way as in the even-order case. 
The only point requiring a modification is the cancellation of the terms containing derivatives of order $2n+2$.
We therefore give the details only for this cancellation.

We introduce the gauge function
\[
\psi_n=\psi_n(\rho)
\coloneqq
a(\rho)^{\frac{2n+1}{2}}\sqrt{\rho}.
\]
Let $j \in \{1,2,\ldots, d\}$.
Applying $\psi_n\partial_j \Delta^n$ to the equation for $z$ in
\eqref{Eq_z}, we have
\begin{align}\label{Eq_2n1z}
	&\partial_t (\psi_n \partial_j \Delta^n z)
	+ (u\cdot \nabla)(\psi_n \partial_j \Delta^n z)
	+  i \sqrt{\kappa}  \nabla ( a\operatorname{div} (\psi_n \partial_j \Delta^n z))
	+ c^2 \lambda \psi_n \partial_j \Delta^n w
	\notag \\
	&\quad 
	+ i \sqrt{\kappa}
	\big\{
	\psi_n (\lambda w \cdot \nabla) \partial_j \Delta^n z 
	+ 2 n \psi_n (\nabla a \cdot \nabla) \partial_j \Delta^n z
	- a (\nabla \psi_n \cdot \nabla) \partial_j \Delta^n z
	- (\nabla \psi_n) a \operatorname{div} \partial_j \Delta^n z
	\notag \\
	&\qquad\qquad\quad 
	+ \psi_n (\partial_j a) \Delta^{n+1} z
	\big\}
	\notag \\
	&\quad 
	+ \sum_{k=1}^6 \mathcal{N}^\prime_{k,j} = 0.
\end{align}
Here,
\begin{align*}
	\mathcal{N}_{1,j}^\prime &\coloneqq  - (\partial_t \psi_n + u \cdot \nabla \psi_n) \partial_j \Delta^n z,
	\notag \\
	\mathcal{N}_{2,j}^\prime &\coloneqq \psi_n [\partial_j \Delta^n, u \cdot \nabla] z,
	\notag \\
	\mathcal{N}_{3,j}^\prime &\coloneqq \psi_n [\partial_j \Delta^n, i\sqrt{\kappa} \lambda w\cdot \nabla] z,
	\notag \\
	\mathcal{N}_{4,j}^\prime &\coloneqq  - i \sqrt{\kappa}
	\left\{
	(\nabla a) (\nabla \psi_n \cdot \partial_j \Delta^n z)
	+ a (\partial_j \Delta^n z \cdot \nabla) \nabla \psi_n
	-2n \psi_n (\partial_j \Delta^n z \cdot \nabla) \nabla a
	- \psi_n (\nabla \partial_j a) \operatorname{div} \Delta^n z
	\right\},
	\notag \\
	\mathcal{N}_{5,j}^\prime &\coloneqq i \sqrt{\kappa} \psi_n \nabla R^\prime_j,
	\notag \\
	\mathcal{N}_{6,j}^\prime &\coloneqq c^2 \lambda \psi_n \partial_j \Delta^n\left\{ (\tilde{g}^\prime(1+\lambda \ell) - 1)w \right\},
	\notag \\
	R^\prime_j &\coloneqq 
	2n (\nabla \partial_j a) \cdot \Delta^n z 
	+ 
	\sum_{\substack{|\alpha|+|\beta|=2n \\ |\alpha|\geqslant2}}
	C_{\alpha, \beta} 
	\left\{
	(\partial_j \partial^\alpha a) (\partial^\beta \operatorname{div} z)
	+ (\partial^\alpha a) (\partial_j \partial^\beta \operatorname{div} z)
	\right\}.
\end{align*}
The terms in braces in \eqref{Eq_2n1z} contain derivatives of $z$ of order $2n+2$.
Taking the $L^2$-inner product of \eqref{Eq_2n1z} with
$\psi_n\partial_j\Delta^n z$, summing over
$j\in\{1,\ldots,d\}$, and then taking the real part, we see that their contribution is
\begin{align*}
	&\operatorname{Re}
	\bigg[
	i \sqrt{\kappa} 
	\sum_{j=1}^d 
	\int_{\mathbb{R}^d}
	\big\{
	\psi_n (\lambda w \cdot \nabla) \partial_j \Delta^n z 
	+ 2 n \psi_n (\nabla a \cdot \nabla)\partial_j \Delta^n z
	\\
	&\qquad\qquad\qquad\qquad
	- a (\nabla \psi_n \cdot \nabla) \partial_j \Delta^n z
	- (\nabla \psi_n) a \operatorname{div} \partial_j \Delta^n z
	\big\}
	\cdot \psi_n \overline{\partial_j \Delta^n z} \, dx
	\bigg]
	\\
	&\quad + \operatorname{Re}
	\left[
	i \sqrt{\kappa} \sum_{j=1}^d \int_{\mathbb{R}^d}
	\psi_n (\partial_j a) \Delta^{n+1} z \cdot \psi_n \overline{\partial_j \Delta^n z} \, dx 
	\right].
\end{align*}
For the last term, integration by parts gives
\begin{align*}
	&\sum_{j=1}^d \int_{\mathbb{R}^d}
	\psi_n (\partial_j a) \Delta^{n+1} z \cdot \psi_n \overline{\partial_j \Delta^n z} \, dx 
	= \sum_{j,k=1}^d
	\int_{\mathbb{R}^d}
	\psi_n^2 (\partial_j a) \partial_k^2 \Delta^{n} z \cdot \overline{\partial_j \Delta^n z} \, dx 
	\\
	&\quad = - \sum_{j,k=1}^d
	\int_{\mathbb{R}^d}
	\partial_k (\psi_n^2 (\partial_j a)) \partial_k \Delta^{n} z \cdot \overline{\partial_j \Delta^n z} \, dx 
	- \sum_{j=1}^d
	\overline{
		\int_{\mathbb{R}^d}
		\psi_n (\nabla a \cdot \nabla) \partial_j \Delta^n z \cdot 
		\psi_n \overline{\partial_j \Delta^{n} z}  \, dx}.
\end{align*}
Since $\{\partial_k (\psi_n^2 (\partial_j a))\}_{1\leqslant j,k \leqslant d}$ is real-symmetric, we have
\begin{equation*}
	\operatorname{Re}
	\left[
	i \sqrt{\kappa} \sum_{j=1}^d \int_{\mathbb{R}^d}
	\psi_n (\partial_j a) \Delta^{n+1} z \cdot \psi_n \overline{\partial_j \Delta^n z} \, dx 
	\right] 
	= \operatorname{Re}
	\left[
	i \sqrt{\kappa}\sum_{j=1}^d
	\int_{\mathbb{R}^d}
	\psi_n (\nabla a \cdot \nabla) \partial_j \Delta^n z \cdot \psi_n \overline{\partial_j \Delta^{n} z}  \, dx
	\right].
\end{equation*}
Hence it follows that the total contribution of the terms containing
derivatives of order $2n+2$ reduces to
\begin{align*}
	&\operatorname{Re}
	\bigg[
	i\sqrt{\kappa}
	\sum_{j=1}^d
	\int_{\mathbb{R}^d}
	\big\{
	\psi_n(\lambda w\cdot\nabla)\partial_j\Delta^n z
	+(2n+1)\psi_n(\nabla a\cdot\nabla)\partial_j\Delta^n z
	\\
	&\qquad\qquad\qquad\qquad
	-a(\nabla\psi_n\cdot\nabla)\partial_j\Delta^n z
	-(\nabla\psi_n)a\operatorname{div}\partial_j\Delta^n z
	\big\}
	\cdot\psi_n\overline{\partial_j\Delta^n z}\,dx
	\bigg].
\end{align*}
This expression has exactly the same structure as the highest-order
contribution in \eqref{H2nz1}, with $\varphi_n$ and $2n$ replaced by
$\psi_n$ and $2n+1$, respectively.
Thus, the same argument as in \eqref{ho} shows that the above contribution vanishes.
The remaining terms are estimated in the same manner as in the even-order case,
using the same product, commutator, and composition estimates.
After summing over $j\in\{1,\ldots,d\}$, we obtain
\begin{align}\label{H2n1est}
	&\frac{1}{2} \frac{d}{dt}
	\| \psi_n \nabla \Delta^n z \|_{L^2}^2
	+  \frac{c^2\lambda^2}{2} \frac{d}{dt}
	\left\|
	\frac{\psi_n}{\sqrt{a}} \nabla \Delta^n \ell 
	\right\|_{L^2}^2
	\notag \\
	&\quad \leqslant C(d,n,m,M,K,P) 
	\big\{ 
	\| \nabla z \|_{L^\infty}
	\| z \|_{\dot{H}^{2n+1}}
	\| \psi_n \nabla \Delta^n z \|_{L^2}
	\notag \\
	&\qquad\quad 
	+ c^2 \lambda^2
	\left(
	\| \ell \|_{L^\infty} \| w \|_{\dot{H}^{2n+1}}
	+ \| w \|_{L^\infty} \| \ell \|_{\dot{H}^{2n+1}}
	\right) \| \psi_n \nabla \Delta^n u \|_{L^2}
	\notag \\
	&\qquad\quad 
	+ c^2 \lambda^2
	\left(
	\| u \|_{L^\infty} \| w \|_{\dot{H}^{2n+1}}
	+ \| w \|_{L^\infty} \| u \|_{\dot{H}^{2n+1}}
	+ \| \operatorname{div} u \|_{L^\infty}
	\| \ell \|_{\dot{H}^{2n+1}}
	\right)\| \nabla \Delta^n \ell \|_{L^2}
	\notag \\
	&\qquad\quad 
	+ c^2 \lambda^3
	\| u \|_{L^\infty} \| w \|_{L^\infty}
	\| \nabla \Delta^n \ell \|_{L^2}^2
	\big\}.
\end{align}
The estimate \eqref{H2n1est} has the same form as \eqref{H2nest}, 
with $\varphi_n$, $\|\cdot\|_{\dot{H}^{2n}}$, and $\Delta^n$ replaced by
$\psi_n$, $\|\cdot\|_{\dot{H}^{2n+1}}$, and $\nabla\Delta^n$, respectively.
Therefore, the same argument shows that \eqref{H2nest2} remains valid
with $2n$ replaced by $2n+1$.
Combining the even- and odd-order cases, we conclude that, 
for every $n\in\mathbb{N}\cup\{0\}$ satisfying $n\leqslant s+3$, 
the following $\dot{H}^n$-estimate holds
\begin{align}\label{Hnest}
	&\|\nabla\phi(t)\|_{\dot{H}^{n}}^2
	+\kappa\lambda^2\|\nabla\ell(t)\|_{\dot{H}^{n}}^2
	+c^2\lambda^2\|\ell(t)\|_{\dot{H}^{n}}^2
	\notag \\
	&\quad \leqslant
	C\left(
	\|\nabla\phi_0\|_{\dot{H}^{n}}^2
	+\kappa\lambda^2\|\nabla\ell_0\|_{\dot{H}^{n}}^2
	+c^2\lambda^2\|\ell_0\|_{\dot{H}^{n}}^2
	\right)
	\exp\left\{ C^\prime \left(
	\mathcal{B}(t) + \lambda \int_0^t \| \nabla \phi(\tau) \|_{L^\infty} \| \nabla \ell(\tau) \|_{L^\infty} \, d\tau
	\right) \right\}
\end{align}
for $0 \leqslant t \leqslant T$, where $C=C(d,n,m,M,K)>0$
and $C^\prime=C^\prime(d,n,m,M,K,P)>0$.

Here, we remark that
 \[
 \kappa\lambda^2\|\nabla\ell(t)\|_{\dot{H}^{n}}^2
 +c^2\lambda^2\|\ell(t)\|_{\dot{H}^{n}}^2
 \sim \| \sqrt{c^2 - \kappa \Delta} \, (\lambda\ell(t)) \|_{\dot{H}^n}^2.
 \]
Thus, summing \eqref{Hnest} over $n=0,1,\ldots,s$, we obtain \eqref{EE1}.

For \eqref{EE2}, let $n\in\{0,1,\ldots,s\}$.
We multiply \eqref{Hnest} at order $n$ by $c^6$ and \eqref{Hnest} at order $n+3$ by $\kappa^3$,
and then add the resulting estimates.
Since $c^6+\kappa^3|\xi|^6
\sim
(c^2+\kappa|\xi|^2)^3$, we obtain
\begin{align*}
	&\|
	(c^2-\kappa\Delta)^{\frac{3}{2}}\nabla\phi(t)
	\|_{\dot{H}^n}^2
	+
	\|
	(c^2-\kappa\Delta)^2 \lambda \ell(t)
	\|_{\dot{H}^n}^2
	\\
	&\leqslant
	C
	\left(
	\|
	(c^2-\kappa\Delta)^{\frac{3}{2}}\nabla\phi_0
	\|_{\dot{H}^n}^2
	+
	\|
	(c^2-\kappa\Delta)^2 \lambda \ell_0
	\|_{\dot{H}^n}^2
	\right)
	\exp\left[
	C^\prime \left\{
	\mathcal{B}(t)
	+\lambda\int_0^t
	\|\nabla\phi(\tau)\|_{L^\infty}
	\|\nabla\ell(\tau)\|_{L^\infty}\,d\tau
	\right\}
	\right],
\end{align*}
where $C=C(d,s,m,M,K)>0$ and $C^\prime=C^\prime(d,s,m,M,K,P)>0$.
Summing this estimate over $n=0,1,\ldots,s$ yields
\eqref{EE2}. This completes the proof of Lemma \ref{EnergyEst}.
\end{proof}

\section{Space-Time Estimates}

In this section, following the approach of \citep{AH17,GNT06}, 
we introduce the dispersive unknown 
and rewrite the system \eqref{EK_enp} as a system of integral equations. 
Then we use the Strichartz estimates for the associated linear propagator 
to derive an estimate for the space-time norm $\mathcal{B}(T)$.
As in the energy estimates of Section 2, particular attention is paid to the precise dependence of 
these estimates on the non-dimensional parameters $c$ and $\kappa$.

We recall the definitions of the higher-order energy norm
\begin{align*}
	\mathcal{H}_s(T)
	&\coloneqq
	\|
	(c^2-\kappa\Delta)^{\frac{3}{2}}\nabla\phi
	\|_{L^\infty_T H^s_x}
	+
	\|
	(c^2-\kappa\Delta)^2(\lambda\ell)
	\|_{L^\infty_T H^s_x},
	\\
	\mathcal{H}_s(0)
	&\coloneqq
	\|
	(c^2-\kappa\Delta)^{\frac{3}{2}}\nabla\phi_0
	\|_{H^s}
	+
	\|
	(c^2-\kappa\Delta)^2(\lambda\ell_0)
	\|_{H^s},
\end{align*}
and the space-time norm
\begin{equation*}
	\mathcal{B}(T)
	\coloneqq
	\frac{c^2}{\sqrt{\kappa}}\lambda
	\|\ell\|_{L^1_TL^\infty_x}
	+
	c\lambda
	\|\nabla\ell\|_{L^1_TL^\infty_x}
	+
	\sqrt{\kappa}\lambda
	\|\nabla^2\ell\|_{L^1_TL^\infty_x}
	+
	\frac{c}{\sqrt{\kappa}}
	\|\nabla\phi\|_{L^1_TL^\infty_x}
	+
	\|\nabla^2\phi\|_{L^1_TL^\infty_x}.
\end{equation*}
The following lemma provides the estimate for $\mathcal{B}(T)$
needed in the bootstrap argument.

\begin{lem}\label{STEst}
	Let $d \in \mathbb{N}$ satisfy $d \geqslant 2$.
	Assume that the exponents $s\in\mathbb{N}$ and $(q,r)\in \mathbb{R}^2$ satisfy
	\[
	2 \leqslant q \leqslant \infty, \quad 2 \leqslant r < \infty,
	\quad \frac{2}{q} + \frac{d}{r} = \frac{d}{2},
	\quad s > \frac{d}{r}.
	\]
	Let $(\ell_0, \nabla \phi_0) \in H^{s+4}(\mathbb{R}^d) \times H^{s+3}(\mathbb{R}^d)$,
	and let $(\ell, \nabla \phi)$ be a classical solution to the system \eqref{EK_enp}
	in the class $C([0,T]; H^{s+4}(\mathbb{R}^d) \times H^{s+3}(\mathbb{R}^d))
	\cap C^1([0,T]; H^{s+2}(\mathbb{R}^d) \times H^{s+1}(\mathbb{R}^d))$ for some $T>0$.
	Suppose that there exist constants $0<m<1<M$ such that
	\[
	m \leqslant \rho(t,x)
	= \mathcal{L}^{-1}(1+\lambda \ell(t,x)) \leqslant M 
	\]
	for $(t,x) \in [0,T] \times \mathbb{R}^d$.
	Then, there exists a positive constant $C_5=C_5(d,s,q,m,M,K,P)$ such that
	\begin{equation}\label{ST}
		\mathcal{B}(T) \leqslant C_5
		T^{1-\frac{1}{q}} \kappa^{-\frac{q+1}{2q}} c^{-2}
		\left\{
		\mathcal{H}_s(0) + \mathcal{B}(T) \mathcal{H}_s(T)
		\right\}.
	\end{equation}
\end{lem}

\subsection{Integral Formulation}

We first rewrite the system \eqref{EK_enp} in terms of the dispersive
unknown and derive the corresponding integral equations.
We define the differential operators $U, H$ and the dispersive unknown $\psi$ by
\[
U \coloneqq \sqrt{\frac{-\Delta}{c^2-\kappa \Delta}},
\quad
H \coloneqq \sqrt{-\Delta(c^2 - \kappa \Delta)},
\quad \psi \coloneqq U \phi.
\]
Then, the system \eqref{EK_enp} can be transformed into
\begin{equation}\label{EK_ep}
	\begin{cases}
		\partial_t (\lambda \ell) - H\psi 
		= N_1(\ell, \phi),
		\\[2mm]
		\partial_t \psi + H(\lambda \ell)
		= N_2(\ell, \phi),
	\end{cases}
\end{equation}
where the nonlinear terms are defined by
 \begin{align*}
	N_1 = N_1(\ell, \phi)
	&\coloneqq 
	- \nabla\phi \cdot \nabla (\lambda \ell) - \left\{ \tilde{a}(1+\lambda \ell)-1 \right\} \Delta \phi,
	\notag \\
	N_2 = N_2(\ell, \phi)
	&\coloneqq 
	U \left[
	- \dfrac{1}{2}|\nabla \phi|^2
	+ \dfrac{\kappa }{2} |\nabla (\lambda \ell)|^2
	+ \kappa  \left\{
	\tilde{a}(1+\lambda \ell) -1
	\right\} \Delta (\lambda \ell)
	- c^2 
	\left\{
	\tilde{g}(1+\lambda \ell) - \lambda \ell
	\right\}
	\right].
\end{align*}
Note that the linear propagator generated by 
$\mathcal{A}
=\begin{bmatrix}
	0 & H \\
	-H & 0 \\
\end{bmatrix}$ is given by
\[
e^{t\mathcal{A}} = e^{it H} \frac{1}{2}
\begin{bmatrix}
	1&-i\\
	i&1 \\
\end{bmatrix}
+ e^{-it H} \frac{1}{2}
\begin{bmatrix}
	1&i\\
	-i&1
\end{bmatrix}.
\]
Applying the Duhamel formula to \eqref{EK_ep} and using
$\nabla\phi=\nabla U^{-1}\psi$, we obtain the following integral
formulation for $(\lambda\ell,\nabla\phi)$:
\begin{align}\label{IE}
	\begin{bmatrix}
		\lambda \ell(t) \\
		\nabla \phi (t) \\
	\end{bmatrix}
	&= e^{it H}
	\frac{1}{2}
	\begin{bmatrix}
		\lambda \ell_0 - i U \phi_0 \\
		\nabla U^{-1}
		(i \lambda \ell_0 + U \phi_0) \\
	\end{bmatrix}
	+ e^{-itH}
	\frac{1}{2}
	\begin{bmatrix}
		\lambda \ell_0 + i U\phi_0 \\
		\nabla U^{-1}
		(-i \lambda \ell_0 + U \phi_0)
		\\
	\end{bmatrix}
	\notag \\
	&\quad + 
	\int_0^t 
	\left\{
	e^{i(t-\tau) H} 
	\frac{1}{2}
	\begin{bmatrix}
		N_1 - i N_2 \\
		\nabla U^{-1}(i N_1 + N_2) 
	\end{bmatrix}
	+ e^{-i(t-\tau)H} \frac{1}{2}
	\begin{bmatrix}
		N_1 + i N_2 \\
		\nabla U^{-1}(- i N_1 + N_2)
	\end{bmatrix}
	\right\} \, d\tau.
\end{align}

\subsection{Dispersive and Strichartz Estimates for $e^{itH}$}

In this subsection, we derive dispersive and Strichartz estimates
for the linear propagator $e^{itH}$, keeping its dependence on the
non-dimensional parameters $c$ and $\kappa$ explicit.
To this end, we introduce the radial phase 
 \[
 p_\mu(|\xi|) \coloneqq |\xi| \sqrt{1+\mu^2 |\xi|^2}
 \]
for $\mu>0$ and $\xi \in \mathbb{R}^d$.
We first recall the following oscillatory integral estimate.

\begin{lem}\label{osc1}
	Let $\chi \in C^\infty_c([0,\infty);[0,1])$ satisfy $\operatorname{supp} \chi \subset
	[2^{-2}, 2^2]$. Then, it holds that
	\[
	\sup_{x \in \mathbb{R}^d}\left|
	\int_{\mathbb{R}^d} e^{i(x\cdot\xi + t p_\mu(|\xi|))}
	\chi\left( \frac{|\xi|}{2^k} \right) \, d\xi
	\right|
	\lesssim t^{-\frac{d}{2}}
	\mu^{-\frac{d}{2}}
	\left(
	\frac{\mu 2^k}{\sqrt{1+\mu^2 2^{2k}}}
	\right)^{\!\!\frac{d-2}{2}}
	\]
	for $t>0, \, \mu>0$ and $k \in \mathbb{Z}$.
\end{lem}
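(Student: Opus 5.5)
The plan is to reduce Lemma \ref{osc1} to a standard dispersive bound for oscillatory integrals with a general radial phase, and then to evaluate the resulting quantities for $p_\mu$. The general bound reads: for a smooth radial phase $p$ with $p'>0$ and $p''\neq0$ on $[2^{k-2},2^{k+2}]$, and with $p',p''$ comparable to their values at $r=2^k$ there,
\[
\sup_{x}\left|\int_{\mathbb{R}^d} e^{i(x\cdot\xi+tp(|\xi|))}\chi\left(\frac{|\xi|}{2^k}\right)d\xi\right|
\lesssim t^{-\frac d2}\,|p''(2^k)|^{-\frac12}\left(\frac{p'(2^k)}{2^k}\right)^{-\frac{d-1}{2}} .
\]
This is the usual frequency-localized radial estimate. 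It requires uniform control of $p'''$ relative to $p''$ on the annulus, and of the higher derivatives after rescaling $\xi=2^k\eta$; both should hold here because $p_\mu$ is a smooth function of $\mu r$ times $r$.

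\textbf{Step 1: the radial bound.} First rescale $\xi=2^k\eta$, so that the amplitude is $\chi(|\eta|)$ supported on $|\eta|\sim1$. The phase becomes $2^k x\cdot\eta+tp(2^k|\eta|)$. Next pass to polar coordinates, which writes the integral as a Hankel-type transform
\[
2^{kd}\int_0^\infty e^{itp(2^kr)}\chi(r)\,r^{d-1}\,\widehat{\sigma}(2^kr|x|)\,dr .
\]
There are two regimes.
\begin{itemize}
\item If $2^k|x|\lesssim1$, or $|x|$ is far from $t\,p'(2^k)$, there is no stationary point. Repeated integration by parts in $r$ (non-stationary phase) gives arbitrary decay in $t\,2^k p'(2^k)$. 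Since $p'(2^k)\,2^k\gtrsim 2^{2k}|p''(2^k)|$ for $p_\mu$, this dominates the claimed bound.
\item If $|x|\sim t\,p'(2^k)$, use the asymptotics $\widehat{\sigma}(\rho)=\sum_\pm e^{\pm i\rho}\rho^{-\frac{d-1}{2}}a_\pm(\rho)$. The factor $\rho^{-\frac{d-1}{2}}=(2^k|x|)^{-\frac{d-1}{2}}\sim (t\,2^kp'(2^k))^{-\frac{d-1}{2}}$ produces the angular decay. Van der Corput's lemma with the second derivative $t\,2^{2k}p''$ of the one-dimensional phase gives the radial factor $(t\,2^{2k}|p''|)^{-\frac12}$.
\end{itemize}
Multiplying by $2^{kd}$ yields the displayed bound. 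For small $t$ the same bound also dominates the trivial estimate $2^{kd}$ whenever the right-hand side exceeds it, so no separate short-time case is needed.

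\textbf{Step 2: computing the factors.} For $p_\mu(r)=r\sqrt{1+\mu^2r^2}$ one has
\[
p_\mu'(r)=\frac{1+2\mu^2r^2}{\sqrt{1+\mu^2r^2}}\sim\langle\mu r\rangle,\qquad
p_\mu''(r)=\frac{\mu^2r(3+2\mu^2r^2)}{(1+\mu^2r^2)^{3/2}}\sim\frac{\mu^2r}{\langle\mu r\rangle},
\]
uniformly on the annulus $r\sim 2^k$, with the implicit constants independent of $\mu$ and $k$. The higher derivatives satisfy $|p_\mu^{(j)}(r)|\lesssim r^{2-j}p''_\mu(r)$, which is exactly what the integrations by parts in Step 1 need. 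Substituting $r=2^k$,
\[
|p''|^{-\frac12}\left(\frac{p'}{r}\right)^{-\frac{d-1}{2}}
\sim \mu^{-1}r^{-\frac12}\langle\mu r\rangle^{\frac12}\,r^{\frac{d-1}{2}}\langle\mu r\rangle^{-\frac{d-1}{2}}
=\mu^{-\frac d2}\left(\frac{\mu r}{\langle\mu r\rangle}\right)^{\frac{d-2}{2}},
\]
which is precisely the stated bound.

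\textbf{Main obstacle.} The delicate point is the uniformity in $\mu2^k$ across the transition between the Schr\"odinger-like regime ($\mu2^k\gtrsim1$) and the wave-like regime ($\mu2^k\ll1$). In the wave-like regime $p''$ is small compared with $p'/r$, and the van der Corput constant must not degenerate. I would handle this by normalizing the radial phase by $t\,2^{2k}p''(2^k)$ and checking that, after this normalization, its second derivative is bounded below and its higher derivatives are bounded, uniformly in $\mu2^k\in(0,\infty)$. This follows from the explicit rational-in-$\mu r$ formulas above.
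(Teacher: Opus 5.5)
Your proposal is correct and follows essentially the paper's route. The paper cites the general radial stationary-phase bound $t^{-d/2}|\det\nabla_\xi^2 p_\mu(|\xi|)|^{-1/2}$ from \citep[Theorem 2.2]{GNT06} and \citep[Proposition 1]{COX11}, which is your bound since the Hessian of a radial phase has eigenvalues $p''$ and $p'/r$ (the latter with multiplicity $d-1$), and then evaluates it at $|\xi|\sim 2^k$ as in your Step 2. The only difference is that you sketch a proof of the cited estimate, and the uniformity issue you flag is harmless, because van der Corput with $k=2$ needs only the lower bound $|\phi''|\gtrsim t\,2^{2k}p_\mu''(2^k)$.
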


This estimate follows from the stationary phase estimates given by
\citep[Theorem 2.2]{GNT06} and \citep[Proposition 1]{COX11}.
Indeed, it holds that
\[
|\det \nabla_\xi^2 p_\mu(|\xi|)|^{-\frac{1}{2}}
\sim \mu^{-\frac{d}{2}} 
\left( \frac{\mu |\xi|}{\sqrt{1+\mu^2|\xi|^2}} \right)^{\!\!\frac{d-2}{2}}.
\]
Also, the same estimate was obtained in Subsection 5.3 of \citep{Son24}.

Let 
$\varphi: \mathbb{R}^d \to [0,1]$ be a smooth radial function satisfying
$\operatorname{supp} \varphi \subset \{ 2^{-1} \leqslant |\xi| \leqslant 2 \}$
and $\sum_{k \in \mathbb{Z}}\varphi(2^{-k}\xi) =1$ for $\xi \in \mathbb{R}^d \setminus\{0\}$.
We define the homogeneous Littlewood--Paley localization $P_k$ by
$P_k f \coloneqq \mathcal{F}^{-1}[\varphi(2^{-k} \cdot) \widehat{f}]$ 
for $k \in\mathbb{Z}$ and $f \in \mathscr{S}^\prime(\mathbb{R}^d)$.
Then, it follows from Lemma \ref{osc1} that
\begin{equation*}
	\| e^{it p_\mu(|D|)} P_k f \|_{L^\infty}
	\lesssim t^{-\frac{d}{2}}
	\mu^{-\frac{d}{2}}
	\left(
	\frac{\mu 2^k}{\sqrt{1+\mu^2 2^{2k}}}
	\right)^{\!\!\frac{d-2}{2}}
	\| P_k f \|_{L^1}.
\end{equation*}
Now, since
$H(|\xi|) \coloneqq |\xi| \sqrt{c^2+\kappa|\xi|^2} = c p_{\frac{\sqrt{\kappa}}{c}}(|\xi|)$,
we obtain the dispersive estimate for $e^{itH}$
\begin{equation}\label{osc2}
	\| e^{it H} P_k f \|_{L^\infty}
	\lesssim 
	(\Lambda t)^{-\frac{d}{2}}
	\| P_k f \|_{L^1},
	\qquad 
	\Lambda= \kappa^{\frac{1}{2}}
	\left(
	\frac{\sqrt{\kappa} 2^k}{\sqrt{c^2+ \kappa 2^{2k}}}
	\right)^{\!-\frac{d-2}{d}}.
\end{equation}
Combining \eqref{osc2} with the $L^2$-unitarity of $e^{itH}$, we obtain
the following Strichartz estimates (see \citep{KT98}).

\begin{lem}\label{L_Str1}
	Assume that the exponents $q,r,a$ and $b$ satisfy
	\[
	2 \leqslant q,r,a,b \leqslant \infty,
	\quad
	\frac{2}{q} + \frac{d}{r} = \frac{d}{2},
	\quad
	\frac{2}{a} + \frac{d}{b} = \frac{d}{2},
	\quad
	(d,q,r), (d,a,b)\neq (2,2,\infty).
	\]
	Then, it holds that
	\begin{align*}
		\| e^{i tH} P_k f \|_{L^q_t L^r_x}
		&\lesssim 
		\kappa^{-\frac{1}{2q}}
		\left(
		\frac{\sqrt{\kappa} 2^k}{\sqrt{c^2+ \kappa 2^{2k}}}
		\right)^{\!\frac{d-2}{dq}}
		\| P_k f \|_{L^2_x},
		\\
		\left\|
		\int_0^t e^{i (t-s)H} P_k F(s) \, ds
		\right\|_{L^q_t L^r_x}
		&\lesssim 
		\kappa^{-\frac{1}{2}\left( \frac{1}{q} + \frac{1}{a} \right)}
		\left(
		\frac{\sqrt{\kappa} 2^k}{\sqrt{c^2+ \kappa 2^{2k}}}
		\right)^{\!\frac{d-2}{d}\left( \frac{1}{q} + \frac{1}{a} \right)}
		\| P_k F \|_{L^{a^\prime}_t L^{b^\prime}_x}.
	\end{align*}
\end{lem}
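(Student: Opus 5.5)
The plan is to derive Lemma \ref{L_Str1} from the dispersive estimate \eqref{osc2} and the $L^2$-unitarity of $e^{itH}$ through the abstract Keel--Tao theorem \citep{KT98}, applied to a rescaled operator so that the frequency-dependent factor $\Lambda$ ends up in the constant.

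Fix $k\in\mathbb{Z}$ and set $\Lambda=\Lambda(k,c,\kappa)$ as in \eqref{osc2}. Let $\tilde P_k$ be a fattened Littlewood--Paley projection with $\tilde P_kP_k=P_k$ and frequency support in $\{2^{k-2}\leqslant|\xi|\leqslant 2^{k+2}\}$. Consider the family $U(t)\coloneqq e^{itH}\tilde P_k$.

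\textbf{Energy estimate.} Since $e^{itH}$ is unitary on $L^2$, we have $\|U(t)f\|_{L^2}\lesssim\|f\|_{L^2}$.

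\textbf{Dispersive estimate.} We need a bound on
\[
U(t)U(s)^\ast=e^{i(t-s)H}\tilde P_k^2 .
\]
Apply \eqref{osc2}, in its Lemma \ref{osc1} form with a cutoff supported on a slightly larger annulus; this is harmless, since Lemma \ref{osc1} allows support in $[2^{-2},2^2]$. On that annulus the quantity $\sqrt{\kappa}2^k/\sqrt{c^2+\kappa2^{2k}}$ changes only by a bounded factor. We therefore obtain
\[
\|U(t)U(s)^\ast g\|_{L^\infty}\lesssim(\Lambda|t-s|)^{-d/2}\|g\|_{L^1}.
\]

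\textbf{Rescaling.} Set $V(t)\coloneqq U(t/\Lambda)$. The family $V$ satisfies the hypotheses of Keel--Tao with decay exponent $\sigma=d/2$. The admissible pairs are those with $2/q+d/r=d/2$, excluding the forbidden endpoint $(2,2,\infty)$ when $d=2$. The endpoint $q=2$ is allowed when $d\geqslant3$. Keel--Tao then gives
\[
\|V(t)f\|_{L^q_tL^r_x}\lesssim\|f\|_{L^2},
\]
together with the retarded inhomogeneous estimate with the dual pair $(a',b')$.

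\textbf{Undoing the scaling.} The change of variables $t\mapsto\Lambda t$ gives $\|U(t)f\|_{L^q_t}=\Lambda^{-1/q}\|V(t)f\|_{L^q_t}$. From the definition of $\Lambda$,
\[
\Lambda^{-1/q}=\kappa^{-\frac{1}{2q}}\Bigl(\tfrac{\sqrt{\kappa}2^k}{\sqrt{c^2+\kappa2^{2k}}}\Bigr)^{\frac{d-2}{dq}},
\]
which is the homogeneous estimate after inserting $f=P_kf$. For the inhomogeneous term, write
\[
\int_0^t e^{i(t-s)H}P_kF(s)\,ds=\int_0^t U(t)U(s)^\ast P_kF(s)\,ds .
\]
Rescaling in both $t$ and $s$ gives a factor $\Lambda^{-1/q}$ from the outer norm. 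The inner $ds$ and $L^{a'}$ norm together produce $\Lambda^{-1}\cdot\Lambda^{1/a'}=\Lambda^{-1/a}$. The total factor is therefore $\Lambda^{-(1/q+1/a)}$, which matches the stated constant.

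The only delicate points are the following. First, the truncation $\int_0^t$ must be handled; it falls within Keel--Tao's retarded estimate, including at the endpoint. Second, the dispersive bound must be uniform when the slightly enlarged cutoff $\tilde P_k^2$ replaces $P_k$. I expect the rest to be routine bookkeeping of the powers of $\Lambda$.
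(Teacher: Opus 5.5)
Your proposal is correct and follows the same route as the paper, which combines the dispersive bound \eqref{osc2} with $L^2$-unitarity and cites Keel--Tao. Your time rescaling $t\mapsto t/\Lambda$ and the fattened projection $\tilde P_k$ spell out the constant bookkeeping that the paper leaves implicit. That bookkeeping yields exactly the factors $\Lambda^{-1/q}$ and $\Lambda^{-(1/q+1/a)}$, and $\tilde P_k$ is also what justifies having $\|P_kf\|_{L^1}$ rather than $\|f\|_{L^1}$ on the right of \eqref{osc2}.
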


We next rewrite the preceding estimates in a form suitable for use in Subsection 3.3.
By Lemma \ref{L_Str1}, we have
\[
\| e^{itH} f \|_{L^q_t(\dot{B}^0_{r,2})_x}
\lesssim
\kappa^{-\frac{1}{2q}}
\big\|
U_{\kappa,c}^{\frac{d-2}{dq}} f
\big\|_{L^2_x},
\qquad
U_{\kappa,c}
\coloneqq
\frac{\sqrt{\kappa}|D|}
{\sqrt{c^2+\kappa|D|^2}}.
\]
Here,
$
\|f\|_{\dot{B}^0_{r,2}}^2
\coloneqq
\sum_{k\in\mathbb{Z}}\|P_kf\|_{L^r}^2
$
denotes the homogeneous Besov seminorm.
When $d=2$, the multiplier
$U_{\kappa,c}^{(d-2)/dq}$ reduces to the identity.
When $d\geqslant 3$, its symbol is bounded above by one.
Hence, by the embedding 
$
\dot{B}^0_{r,2}(\mathbb{R}^d)
\hookrightarrow L^r(\mathbb{R}^d)
$
for $2\leqslant r <\infty$
and the Minkowski inequality, we have the following estimates.
\begin{cor}\label{L_Str2}
	Assume that the exponents $q$ and $r$ satisfy
	\[
	2\leqslant q\leqslant \infty, \quad 2 \leqslant r < \infty,
	\quad
	\frac{2}{q} + \frac{d}{r} = \frac{d}{2}.
	\]
	Then, there exists a constant $C=C(d,q)>0$ such that
	\begin{align*}
		\| e^{i tH} f \|_{L^q_t L^r_x}
		&\leqslant C
		\kappa^{-\frac{1}{2q}}
		\| f \|_{L^2_x},
		\\
		\left\|
		\int_0^t e^{i (t-s)H} F(s) \, ds
		\right\|_{L^q_t L^r_x}
		&\leqslant C \kappa^{-\frac{1}{2q}}
		\| F \|_{L^1_t L^2_x}.
	\end{align*}
\end{cor}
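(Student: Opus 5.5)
The corollary follows from Lemma \ref{L_Str1} applied frequency by frequency, followed by summation over the dyadic blocks. We treat the homogeneous and the inhomogeneous estimate separately.

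\emph{Homogeneous estimate.} For each $k\in\mathbb{Z}$, Lemma \ref{L_Str1} gives
\[
\|e^{itH}P_kf\|_{L^q_tL^r_x}\lesssim \kappa^{-\frac{1}{2q}}\,\theta_k^{\frac{d-2}{dq}}\,\|P_kf\|_{L^2},
\qquad
\theta_k\coloneqq\frac{\sqrt{\kappa}2^k}{\sqrt{c^2+\kappa2^{2k}}}\leqslant 1 .
\]
The excluded case $(d,q,r)=(2,2,\infty)$ does not occur, since $r<\infty$.

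Since $d\geqslant 2$ and $\theta_k\leqslant 1$, the factor $\theta_k^{(d-2)/(dq)}$ is at most $1$. When $d=2$ it equals $1$ identically. Hence
\[
\|e^{itH}P_kf\|_{L^q_tL^r_x}\lesssim\kappa^{-\frac{1}{2q}}\|P_kf\|_{L^2}.
\]

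Next we pass from the dyadic pieces to $f$. For $2\leqslant r<\infty$, the Littlewood--Paley square function estimate and the Minkowski inequality in $L^{r/2}$ give
\[
\|g\|_{L^r}\lesssim\Bigl\|\Bigl(\sum_k|P_kg|^2\Bigr)^{1/2}\Bigr\|_{L^r}\leqslant\Bigl(\sum_k\|P_kg\|_{L^r}^2\Bigr)^{1/2}.
\]
Applying this at each time and then using Minkowski in $L^{q/2}_t$ (valid since $q\geqslant2$), we obtain
\[
\|e^{itH}f\|_{L^q_tL^r_x}\lesssim\Bigl(\sum_k\|e^{itH}P_kf\|_{L^q_tL^r_x}^2\Bigr)^{1/2}\lesssim\kappa^{-\frac{1}{2q}}\Bigl(\sum_k\|P_kf\|_{L^2}^2\Bigr)^{1/2}\sim\kappa^{-\frac{1}{2q}}\|f\|_{L^2}.
\]
The last equivalence is Plancherel's theorem.

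\emph{Inhomogeneous estimate.} The cleanest route avoids the retarded estimate of Lemma \ref{L_Str1} altogether. By Minkowski's integral inequality,
\[
\Bigl\|\int_0^t e^{i(t-s)H}F(s)\,ds\Bigr\|_{L^q_tL^r_x}
\leqslant\int_0^\infty\bigl\|\mathbf 1_{\{t>s\}}\,e^{itH}\bigl(e^{-isH}F(s)\bigr)\bigr\|_{L^q_tL^r_x}\,ds .
\]
Dropping the indicator only increases the norm. Applying the homogeneous estimate just proved to $e^{-isH}F(s)$, and using that $e^{-isH}$ is unitary on $L^2$, we get
\[
\Bigl\|\int_0^t e^{i(t-s)H}F(s)\,ds\Bigr\|_{L^q_tL^r_x}
\lesssim\kappa^{-\frac{1}{2q}}\int_0^\infty\|F(s)\|_{L^2}\,ds=\kappa^{-\frac{1}{2q}}\|F\|_{L^1_tL^2_x}.
\]

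Alternatively, one can use the second estimate of Lemma \ref{L_Str1} with $(a,b)=(\infty,2)$, which is always admissible. This gives the factor $\kappa^{-\frac{1}{2}\cdot\frac{1}{q}}\theta_k^{(\cdot)}\leqslant\kappa^{-\frac{1}{2q}}$ on each block. The blocks are then summed as above, using $\ell^2$-summation of $\|P_kF\|_{L^1_tL^2_x}$ and Minkowski in time to return to $\|F\|_{L^1_tL^2_x}$.

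The only delicate point is the passage from dyadic pieces to the full function. This requires $r<\infty$ for the square function estimate and $q,r\geqslant2$ for the Minkowski interchanges, both of which are among the stated hypotheses. The constant depends only on $d$ and $q$, since $r$ is determined by $q$.
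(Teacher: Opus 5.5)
Your proof is correct and follows essentially the paper's argument: you apply Lemma \ref{L_Str1} on each dyadic block, use $\theta_k\le 1$ (the factor is identically one when $d=2$), and sum in $\ell^2$ through the embedding $\dot B^0_{r,2}\hookrightarrow L^r$, which you prove explicitly via the square function and Minkowski's inequality. For the inhomogeneous bound, deriving it from the homogeneous estimate by Minkowski's integral inequality is an equally valid, slightly more direct reading of the paper's brief appeal to the Minkowski inequality, and your alternative with $(a,b)=(\infty,2)$ also works.
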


\subsection{Proof of Lemma \ref{STEst}}

In this subsection, we combine the integral formulation \eqref{IE} with
the Strichartz estimates for $e^{itH}$ to establish the desired estimate \eqref{ST} for $\mathcal{B}(T)$.
Note that $\mathcal{B}(T)$ can be written as
\begin{align*}
	\mathcal{B}(T)
	&\coloneqq  
	\frac{c^2}{\sqrt{\kappa}} \lambda 
	\left\| \ell \right\|_{L^1_T L^\infty_x} 
	+
	c \lambda 
	\| \nabla \ell \|_{L^1_T L^\infty_x}
	+
	\sqrt{\kappa} \lambda 
	\left\| 
	\nabla^2 \ell
	\right\|_{L^1_T L^\infty_x}
	+
	\frac{c}{\sqrt{\kappa}}
	\| \nabla \phi \|_{L^1_T L^\infty_x}
	+
	\| \nabla^2 \phi \|_{L^1_T L^\infty_x}
	\\
	&= \frac{c^2}{\sqrt{\kappa}} 
	\sum_{j=0}^2 \left( \frac{\sqrt{\kappa}}{c} \right)^{j} \| \nabla^j (\lambda \ell) \|_{L^1_T L^\infty_x}
	+ \frac{c}{\sqrt{\kappa}}
	\sum_{j=0}^1 \left( \frac{\sqrt{\kappa}}{c} \right)^{j} \| \nabla^{j} (\nabla \phi) \|_{L^1_T L^\infty_x}.
\end{align*}

\begin{proof}[Proof of Lemma \ref{STEst}]
By the H\"older inequality in time and the Sobolev embedding $W^{s,r}(\mathbb{R}^d) \hookrightarrow L^\infty(\mathbb{R}^d)$,
we see that
 \begin{equation}\label{HS}
 	\mathcal{B}(T)
 	\lesssim 
 	T^{1-\frac{1}{q}}
 	\left\{
 	\frac{c^2}{\sqrt{\kappa}} 
 	\sum_{j=0}^2 \left( \frac{\sqrt{\kappa}}{c} \right)^j \| \nabla^j (\lambda \ell) \|_{L^q_T W^{s,r}_x}
 	+ \frac{c}{\sqrt{\kappa}}
 	\sum_{j=0}^1 \left( \frac{\sqrt{\kappa}}{c} \right)^j \| \nabla^{j} (\nabla \phi) \|_{L^q_T W^{s,r}_x}
 	\right\}.
 \end{equation}
The norms on the right-hand side of \eqref{HS} can now be estimated using the Strichartz estimates.
We first consider the estimates for the linear terms. Set
\begin{align*}
	\lambda \ell^{\text{lin}}(t)
	&\coloneqq 
	e^{it H}
	\frac{1}{2}
	(\lambda \ell_0 - i U \phi_0) 
	+ e^{-itH}
	\frac{1}{2}
	(\lambda \ell_0 + i U \phi_0),
	\\
	\nabla \phi^{\text{lin}}(t)
	&\coloneqq 
	e^{it H}
	\frac{1}{2}
		\nabla U^{-1}
		(i \lambda \ell_0 + U \phi_0) 
	+ e^{-itH}
	\frac{1}{2}
		\nabla U^{-1}
		(-i \lambda \ell_0 + U \phi_0).
\end{align*}
For $f \in \{ \lambda \ell_0, U\phi_0 \}$, it follows from Corollary \ref{L_Str2} that
\begin{align*}
	\frac{c^2}{\sqrt{\kappa}} 
	\sum_{j=0}^2 \left( \frac{\sqrt{\kappa}}{c} \right)^j \| \nabla^j e^{\pm i t H} f \|_{L^q_T W^{s,r}_x}
	&\lesssim 
	\frac{c^2}{\sqrt{\kappa}} 
	\sum_{j=0}^2 \left( \frac{\sqrt{\kappa}}{c} \right)^j 
	\kappa^{-\frac{1}{2q}} \| \nabla^j f \|_{H^{s}_x}
	\\
	&\lesssim
	\kappa^{-\frac{q+1}{2q}} c^{-2}
	\sum_{j=0}^2 
	 \| c^{4-j}  (\sqrt{\kappa}|D|)^j f \|_{H^{s}_x}
	 \\
	 &\lesssim
	 \kappa^{-\frac{q+1}{2q}} c^{-2}
	 \| (c^2 -\kappa \Delta)^2 f \|_{H^s_x}.
\end{align*}
Then, since $\| (c^2 -\kappa \Delta)^2 U \phi_0 \|_{H^s_x}
\sim \| (c^2 -\kappa \Delta)^{\frac{3}{2}} \nabla \phi_0 \|_{H^s_x}$, we have
 \begin{equation}\label{lin_ell}
 	\frac{c^2}{\sqrt{\kappa}} 
 	\sum_{j=0}^2 \left( \frac{\sqrt{\kappa}}{c} \right)^j \| \nabla^j \lambda \ell^{\text{lin}} \|_{L^q_T W^{s,r}_x}
 	\lesssim 
 	\kappa^{-\frac{q+1}{2q}} c^{-2} \mathcal{H}_s(0).
 \end{equation}
Similarly, it holds that for $f \in \{ \lambda \ell_0, U\phi_0 \}$
\begin{align*}
	\frac{c}{\sqrt{\kappa}}
	\sum_{j=0}^1 \left( \frac{\sqrt{\kappa}}{c} \right)^j \| \nabla^{j} e^{\pm it H} (\nabla U^{-1}f) \|_{L^q_T W^{s,r}_x}
	&\lesssim 
	\frac{c}{\sqrt{\kappa}}
	\sum_{j=0}^1 \left( \frac{\sqrt{\kappa}}{c} \right)^j \kappa^{-\frac{1}{2q}} 
	\| \nabla^{j} (\nabla U^{-1}f) \|_{H^{s}_x}
	\\
	&\lesssim 
	\kappa^{-\frac{q+1}{2q}} c^{-2}
	\sum_{j=0}^1 
	\| c^{3-j} (\sqrt{\kappa}|D|)^j \sqrt{c^2-\kappa \Delta} \, f \|_{H^s_x}
	\\
	&\lesssim
	\kappa^{-\frac{q+1}{2q}} c^{-2}
	\| (c^2 -\kappa \Delta)^2 f \|_{H^s_x}.
\end{align*}
This yields
 \begin{equation}\label{lin_phi}
 	\frac{c}{\sqrt{\kappa}}
 	\sum_{j=0}^1 \left( \frac{\sqrt{\kappa}}{c} \right)^j \| \nabla^{j} (\nabla \phi^{\text{lin}}) \|_{L^q_T W^{s,r}_x}
 	\lesssim 
 	\kappa^{-\frac{q+1}{2q}} c^{-2} \mathcal{H}_s(0).
 \end{equation}
Let us next treat the estimates for the nonlinear parts. Put
\begin{align*}
	\lambda \ell^{\text{nl}}(t)
	&\coloneqq \int_0^t 
	\left\{
	e^{i(t-\tau) H} 
	\frac{1}{2}(N_1 - i N_2)
	+ e^{-i(t-\tau)H} \frac{1}{2}
	(N_1 + i N_2)
	\right\} \, d\tau,
	\\
	\nabla \phi^{\text{nl}}(t)
	&\coloneqq 
	\int_0^t 
	\left\{
	e^{i(t-\tau) H} 
	\frac{1}{2}
		\nabla U^{-1}(i N_1 + N_2) 
	+ e^{-i(t-\tau)H} \frac{1}{2}
		\nabla U^{-1}(- i N_1 + N_2)
	\right\} \, d\tau.
\end{align*}
For $k \in \{1, 2\}$, it follows from Corollary \ref{L_Str2} that
\begin{align*}
	\frac{c^2}{\sqrt{\kappa}} 
	\sum_{j=0}^2 \left( \frac{\sqrt{\kappa}}{c} \right)^j 
	\left\| \nabla^j \int_0^t e^{\pm i (t-\tau)H} N_k(\tau) \, d\tau \right\|_{L^q_T W^{s,r}_x}
	&\lesssim 
	\frac{c^2}{\sqrt{\kappa}} 
	\sum_{j=0}^2 \left( \frac{\sqrt{\kappa}}{c} \right)^j  \kappa^{-\frac{1}{2q}}
	\| \nabla^j N_k \|_{L^1_T H^s_x}
	\\
	&\lesssim 
	\kappa^{-\frac{q+1}{2q}} c^{-2}
	\sum_{j=0}^2 c^4 \left( \frac{\sqrt{\kappa}}{c} \right)^j 
	\| \nabla^j N_k \|_{L^1_T H^s_x},
\end{align*}
and
 \begin{align*}
 	&\frac{c}{\sqrt{\kappa}} 
 	\sum_{j=0}^1 \left( \frac{\sqrt{\kappa}}{c} \right)^j 
 	\left\| \nabla^j \int_0^t e^{\pm i (t-\tau)H} \nabla U^{-1} N_k(\tau) \, d\tau \right\|_{L^q_T W^{s,r}_x}
 	\\
 	&\quad \lesssim 
 	\frac{c}{\sqrt{\kappa}} 
 	\sum_{j=0}^1 \left( \frac{\sqrt{\kappa}}{c} \right)^j 
 	\kappa^{-\frac{1}{2q}}
 	\| \nabla^j (\nabla U^{-1} N_k) \|_{L^1_T H^s_x}
 	\\
 	&\quad \lesssim 
 	\kappa^{-\frac{q+1}{2q}} c^{-2}
 	\sum_{j=0}^1 c^3 \left( \frac{\sqrt{\kappa}}{c} \right)^j  
 	\left(
 	c \| \nabla^j N_k \|_{L^1_T H^s_x} + \sqrt{\kappa} \| \nabla^{j+1} N_k \|_{L^1_T H^s_x}
 	\right)
 	\\
 	&\quad \lesssim 
 	\kappa^{-\frac{q+1}{2q}} c^{-2}
 	\left\{
 	\sum_{j=0}^1 c^4 \left( \frac{\sqrt{\kappa}}{c} \right)^j  \| \nabla^j N_k \|_{L^1_T H^s_x}
 	+ \sum_{j=0}^1 c^4 
 	 \left( \frac{\sqrt{\kappa}}{c} \right)^{j+1}  \| \nabla^{j+1} N_k \|_{L^1_T H^s_x}
 	\right\}.
 \end{align*}
Hence it suffices to show that
\[
\sum_{j=0}^2 c^4 \left( \frac{\sqrt{\kappa}}{c} \right)^j 
\left(
\left\| \nabla^j N_1
\right\|_{L^1_T H^{s}_x}
+ \left\| \nabla^j N_2
\right\|_{L^1_T H^{s}_x}
\right)
\lesssim \mathcal{B}(T) \mathcal{H}_s(T).
\]
We first estimate each term in $N_1$.
For the term $\nabla \phi \cdot \nabla (\lambda \ell)$, it follows from Lemma \ref{L_prod} that
\begin{align*}
	&\sum_{j=0}^2 c^4 \left( \frac{\sqrt{\kappa}}{c} \right)^j 
	\| \nabla^j (\nabla \phi \cdot \nabla (\lambda \ell)) \|_{L^1_T H^s_x}
	\\
	&\quad \lesssim 
	\sum_{j=0}^2 c^4 \left( \frac{\sqrt{\kappa}}{c} \right)^j 
	\left(
	\| \nabla (\lambda \ell) \|_{L^1_T L^\infty_x}
	\| \nabla^{j+1} \phi \|_{L^\infty_T H^s_x} 
	+ \| \nabla \phi \|_{L^1_T L^\infty_x}
	\| \nabla^{j+1} (\lambda \ell) \|_{L^\infty_T H^s_x}
	\right)
	\\
	&\quad \lesssim 
	 \sum_{j=0}^2
	\left(
	c \| \nabla (\lambda \ell) \|_{L^1_T L^\infty_x}
	\| c^{3-j} (\sqrt{\kappa}|D|)^j \nabla \phi \|_{L^\infty_T H^s_x} 
	+ \frac{c}{\sqrt{\kappa}}\| \nabla \phi \|_{L^1_T L^\infty_x}
	\| c^{3-j} (\sqrt{\kappa}|D|)^{j+1} (\lambda \ell) \|_{L^\infty_T H^s_x}
	\right)
	\\
	&\quad \lesssim 
	\mathcal{B}(T) \mathcal{H}_s(T).
\end{align*}
Concerning the term $\{\tilde{a}(1+\lambda \ell)-1\} \Delta \phi$,
since $\mathcal{L}(m) \leqslant 1 + \lambda \ell \leqslant \mathcal{L}(M)$ by our assumption,
we have by Lemmas \ref{L_prod} and \ref{L_comp2} that
\begin{align*}
	&\sum_{j=0}^2 c^4 \left( \frac{\sqrt{\kappa}}{c} \right)^j 
	\| \nabla^j 
	\left[
	\left\{ \tilde{a}(1+\lambda \ell)-1 \right\} \Delta \phi
	\right]
	\|_{L^1_T H^s_x}
	\\
	&\quad \lesssim
	\sum_{j=0}^2 c^4 \left( \frac{\sqrt{\kappa}}{c} \right)^j 
	\left(
	\| \nabla^2 \phi \|_{L^1_T L^\infty_x}
	\| \nabla^j \left\{  \tilde{a}(1+\lambda \ell)-1 \right\} \|_{L^\infty_T H^s_x}
	+ \|  \tilde{a}(1+\lambda \ell)-1 \|_{L^1_T L^\infty_x}
	\| \nabla^{j+2} \phi \|_{L^\infty_T H^s_x}
	\right)
	\\
	&\quad \lesssim
	\sum_{j=0}^2 c^4 \left( \frac{\sqrt{\kappa}}{c} \right)^j 
	\left(
	\| \nabla^2 \phi \|_{L^1_T L^\infty_x}
	\| \nabla^j (\lambda \ell) \|_{L^\infty_T H^s_x}
	+ \|  \lambda \ell \|_{L^1_T L^\infty_x}
	\| \nabla^{j+2} \phi \|_{L^\infty_T H^s_x}
	\right)
	\\
	&\quad \lesssim
	\sum_{j=0}^2 
	\left(
	\| \nabla^2 \phi \|_{L^1_T L^\infty_x}
	\| c^{4-j} (\sqrt{\kappa}|D|)^j (\lambda \ell) \|_{L^\infty_T H^s_x}
	+ \frac{c^2}{\sqrt{\kappa}} \|  \lambda \ell \|_{L^1_T L^\infty_x}
	\| c^{2-j} (\sqrt{\kappa}|D|)^{j+1} \nabla \phi \|_{L^\infty_T H^s_x}
	\right)
	\\
	&\quad \lesssim 
	\mathcal{B}(T) \mathcal{H}_s(T).
\end{align*}
We next derive bounds for each term in $N_2$. Note that
\[
\| U f \|_{L^2_x} = \frac{1}{\sqrt{\kappa}} \left\| \frac{\sqrt{\kappa}|D|}{\sqrt{c^2- \kappa \Delta}} f\right\|_{L^2_x}
\leqslant \frac{1}{\sqrt{\kappa}} \| f \|_{L^2_x}.
\]
Thus, in the $L^2$-estimate, the operator $U$ provides a gain of $\kappa^{-1/2}$.
For the term $U|\nabla\phi|^2$, it follows from Lemma \ref{L_prod} that
\begin{align*}
	\sum_{j=0}^2 c^4 \left( \frac{\sqrt{\kappa}}{c} \right)^j 
	\| \nabla^j U [|\nabla\phi|^2]
	\|_{L^1_T H^s_x}
	&\leqslant 
	\sum_{j=0}^2 c^4 \left( \frac{\sqrt{\kappa}}{c} \right)^j \frac{1}{\sqrt{\kappa}}
	\| \nabla^j [|\nabla\phi|^2]
	\|_{L^1_T H^s_x}
	\\
	&\lesssim
	\sum_{j=0}^2 c^4 \left( \frac{\sqrt{\kappa}}{c} \right)^j \frac{1}{\sqrt{\kappa}}
	\| \nabla \phi \|_{L^1_T L^\infty_x}
	\| \nabla^{j+1} \phi \|_{L^\infty_T H^s_x}
	\\
	&\lesssim 
	\sum_{j=0}^2
	\frac{c}{\sqrt{\kappa}} \| \nabla \phi \|_{L^1_T L^\infty_x}
	\| c^{3-j} (\sqrt{\kappa}|D|)^{j} \nabla \phi \|_{L^\infty_T H^s_x}
	\\
	&\lesssim 
	\mathcal{B}(T) \mathcal{H}_s(T).
\end{align*}
Similarly, for the term $U[\kappa|\nabla(\lambda \ell)|^2]$, we have
\begin{align*}
	\sum_{j=0}^2 c^4 \left( \frac{\sqrt{\kappa}}{c} \right)^j 
	\| \nabla^j U [\kappa |\nabla (\lambda \ell)|^2]
	\|_{L^1_T H^s_x}
	&\lesssim
	\sum_{j=0}^2 c^4 \left( \frac{\sqrt{\kappa}}{c} \right)^j 
	\sqrt{\kappa}
	\| \nabla (\lambda \ell) \|_{L^1_T L^\infty_x}
	\| \nabla^{j+1} (\lambda \ell) \|_{L^\infty_T H^s_x}
	\\
	&\lesssim  
	\sum_{j=0}^2
	c \| \nabla (\lambda \ell) \|_{L^1_T L^\infty_x}
	\| c^{3-j} (\sqrt{\kappa}|D|)^{j+1} (\lambda \ell) \|_{L^\infty_T H^s_x}
	\\
	&\lesssim 
	\mathcal{B}(T) \mathcal{H}_s(T).
\end{align*}
Concerning the term $U [\kappa  \left\{
\tilde{a}(1+\lambda \ell) -1
\right\} \Delta (\lambda \ell)]$,
Lemmas \ref{L_prod}, \ref{L_comp2}
and the bound $\mathcal{L}(m) \leqslant 1 + \lambda \ell \leqslant \mathcal{L}(M)$ yield
\begin{align*}
	&\sum_{j=0}^2 c^4 \left( \frac{\sqrt{\kappa}}{c} \right)^j 
	\| \nabla^j U [\kappa  \left\{
	\tilde{a}(1+\lambda \ell) -1
	\right\} \Delta (\lambda \ell)]
	\|_{L^1_T H^s_x}
	\\
	&\quad \lesssim
	\sum_{j=0}^2 c^4 \left( \frac{\sqrt{\kappa}}{c} \right)^j 
	\sqrt{\kappa}
	\left(
	\| \lambda \ell \|_{L^1_T L^\infty_x}
	\| \nabla^{j+2} (\lambda \ell) \|_{L^\infty_T H^s_x}
	+ \| \nabla^2 (\lambda \ell) \|_{L^1_T L^\infty_x}
	\| \nabla^j (\lambda\ell) \|_{L^\infty_T H^s_x}
	\right)
	\\
	&\quad \lesssim 
	\sum_{j=0}^2
	\left(
	\frac{c^2}{\sqrt{\kappa}} \| \lambda \ell \|_{L^1_T L^\infty_x}
	\| c^{2-j} (\sqrt{\kappa}|D|)^{j+2} (\lambda \ell) \|_{L^\infty_T H^s_x}
	+ \sqrt{\kappa} \| \nabla^2 (\lambda \ell) \|_{L^1_T L^\infty_x}
	\| c^{4-j} (\sqrt{\kappa}|D|)^j (\lambda\ell) \|_{L^\infty_T H^s_x}
	\right)
	\\
	&\quad \lesssim
	\mathcal{B}(T) \mathcal{H}_s(T).
\end{align*}
We finally treat the pressure term 
$U [
c^2 
\left\{
\tilde{g}(1+\lambda \ell) - \lambda \ell
\right\}
]$.
Since $\tilde{g}(1)=0$ and $\tilde{g}^\prime(1)=1$, 
it follows from the Taylor theorem, Lemmas \ref{L_prod}, \ref{L_comp1} and
$\mathcal{L}(m) \leqslant 1 + \lambda \ell \leqslant \mathcal{L}(M)$ that
\begin{align*}
	\sum_{j=0}^2 c^4 \left( \frac{\sqrt{\kappa}}{c} \right)^j 
	\| \nabla^j U [
	c^2 
	\left\{
	\tilde{g}(1+\lambda \ell) - \lambda \ell
	\right\}
	]
	\|_{L^1_T H^s_x}
	&\lesssim
	\sum_{j=0}^2 \frac{c^6}{\sqrt{\kappa}} \left( \frac{\sqrt{\kappa}}{c} \right)^j
	\| \lambda \ell \|_{L^1_T L^\infty_x}
	\| \nabla^j (\lambda \ell) \|_{L^\infty_T H^s_x}
	\\
	&\lesssim \sum_{j=0}^2
	\frac{c^2}{\sqrt{\kappa}} \| \lambda \ell \|_{L^1_T L^\infty_x}
	\| c^{4-j} (\sqrt{\kappa}|D|)^j (\lambda \ell) \|_{L^\infty_T H^s_x}
	\\
	&\lesssim 
	\mathcal{B}(T) \mathcal{H}_s(T).
\end{align*}
Therefore, we obtain
\begin{equation}\label{nl_ellphi}
	\frac{c^2}{\sqrt{\kappa}} 
	\sum_{j=0}^2 \left( \frac{\sqrt{\kappa}}{c} \right)^{j} \| \nabla^j (\lambda \ell^{\text{nl}}) \|_{L^q_T W^{s,r}_x}
	+ \frac{c}{\sqrt{\kappa}}
	\sum_{j=0}^1 \left( \frac{\sqrt{\kappa}}{c} \right)^{j} \| \nabla^{j} (\nabla \phi^{\text{nl}}) \|_{L^q_T W^{s,r}_x}
	\lesssim 
	\kappa^{-\frac{q+1}{2q}} c^{-2}
	\mathcal{B}(T) \mathcal{H}_s(T),
\end{equation}
where the implicit constant depends on $d,s,q,m,M,K$ and $P$.
Hence the desired estimate follows from \eqref{HS}, \eqref{lin_ell}, \eqref{lin_phi} and \eqref{nl_ellphi}.
\end{proof}

\section{Proof of Theorem \ref{main}}

We are now ready to present the proof of Theorem \ref{main}.

\begin{proof}[Proof of Theorem \ref{main}]
Let $d \geqslant 2$ be an integer.
Let $q\in(2,\infty)$ if $d=2$, and let $q\in[2,\infty)$ if $d\geqslant 3$.
Take $s\in\mathbb{N}$ with $s>d/2$.
Suppose that $(\ell_0, \nabla \phi_0) \in H^{s+4}(\mathbb{R}^d) \times H^{s+3}(\mathbb{R}^d)$ satisfies
\begin{equation}\label{small2}
	\mathcal{E}_s(0) = 
	\|\nabla\phi_0\|_{H^{s}} + 
	\| \sqrt{c^2-\kappa \Delta} \, (\lambda \ell_0) \|_{H^{s}} \leqslant \varepsilon c
\end{equation}
for $\varepsilon>0$ to be chosen later. We take the exponent $r \in [2,\infty)$ so that
$(q,r)$ satisfies
the admissible condition  $\frac{2}{q}+\frac{d}{r}=\frac{d}{2}$.
Note that $s>d/2 \geqslant d/r$.

We first verify that the system \eqref{EK_enp} admits a local-in-time solution.
By the Sobolev embedding $H^s(\mathbb{R}^d) \hookrightarrow L^\infty(\mathbb{R}^d)$
and \eqref{small2}, we see that
\[
\| \lambda \ell_0 \|_{L^\infty}
\leqslant C_{d,s} \| \lambda \ell_0 \|_{H^s}
\leqslant \frac{C_{d,s}}{c} \| \sqrt{c^2-\kappa \Delta} \, (\lambda\ell_0) \|_{H^s}
\leqslant C_{d,s} \varepsilon.
\]
Here, $C_{d,s}>0$ denotes the constant in the Sobolev inequality
$\| f \|_{L^\infty} \leqslant C_{d,s} \| f \|_{H^s}$.
We first require $\varepsilon>0$ to satisfy
\begin{equation}\label{eps1}
	0< \varepsilon \leqslant \frac{1}{C_{d,s}}\min\left\{
	1-\mathcal{L}\left( \frac{3}{4} \right), \, \mathcal{L}\left( \frac{5}{4} \right) -1
	\right\}.
\end{equation}
Then, it holds that $\mathcal{L}(3/4) \leqslant 1 + \lambda \ell_0(x) \leqslant \mathcal{L}(5/4)$,
which is equivalent to $3/4 \leqslant \rho_0(x) \leqslant 5/4$ for $x \in \mathbb{R}^d$.
Also, since $\rho_0 -1 = \mathcal{L}^{-1}(1+\lambda \ell_0)-1$,
Lemma \ref{L_comp1} implies $\rho_0-1 \in H^{s+4}(\mathbb{R}^d)$.
Therefore, the local existence theorem established in \citep{BGDD07} yields
$T_0>0$ and a unique local solution to the Euler--Korteweg system
\eqref{EK_nond} such that
\[
(\rho-1, u) \in 
C([0, T_0];H^{s+4}(\mathbb{R}^d) \times H^{s+3}(\mathbb{R}^d)) \cap
C^1([0, T_0]; H^{s+2}(\mathbb{R}^d) \times H^{s+1}(\mathbb{R}^d))
\]
and $1/2 \leqslant \rho(t,x) \leqslant 3/2$ for $(t,x) \in [0, T_0] \times \mathbb{R}^d$.
Also, $\operatorname{curl} u_0 =0$ implies $\operatorname{curl} u(t,x)=0$.
Then, since $\lambda \ell = \mathcal{L}(1+(\rho-1))-1$, we see from Lemma \ref{L_comp1} that
\begin{equation}\label{solc}
	(\ell, \nabla\phi ) \in 
	C([0, T_0];H^{s+4}(\mathbb{R}^d) \times H^{s+3}(\mathbb{R}^d)) \cap
	C^1([0, T_0]; H^{s+2}(\mathbb{R}^d) \times H^{s+1}(\mathbb{R}^d))
\end{equation}
is a unique solution to \eqref{EK_enp} on $[0, T_0]$ satisfying
$\mathcal{L}\left( 1/2 \right) \leqslant 
1 + \lambda \ell(t,x)
\leqslant \mathcal{L}\left( 3/2 \right)$.

Now, let $T>0$ be a time to be specified later, 
and let $T^\ast$ be the maximal existence time of the solution $(\ell, \nabla \phi)$
in the class \eqref{solc}.
We define
\begin{equation*}
	T^\prime
	\coloneqq \sup\left\{
	t \in [0,T] \cap [0, T^\ast) 
	\bigm| 
	\mathcal{E}_{s}(t) \leqslant 2C_1 e^{C_2} \mathcal{E}_s(0),
	\ \ 
	\mathcal{H}_s(t) \leqslant 2 C_3  e^{C_4} \mathcal{H}_s(0),
	\ \  
	\mathcal{B}(t) \leqslant 1
	\right\}.
\end{equation*}
Here, $C_j \ (j=1,2,3,4)$ denote the constants appearing in Lemma \ref{EnergyEst}.
Enlarging $C_1$ and $C_3$ if necessary, we may assume that $C_1,C_3\geqslant 1$.
Since $(\ell, \nabla\phi )$ is continuous in time with values in 
$H^{s+4}(\mathbb{R}^d) \times H^{s+3}(\mathbb{R}^d)$,
we see that $T^\prime>0$.

We claim that
$T^\prime=\min\{T,T^\ast\}$
for a suitable choice of $T>0$ as specified below.
Suppose, to the contrary, that
\[
T^\prime<\min\{T,T^\ast\}.
\]
The Sobolev embedding $H^s(\mathbb{R}^d) \hookrightarrow L^\infty(\mathbb{R}^d)$
and the smallness condition on initial data \eqref{small2} imply
that for $0 \leqslant t \leqslant T^\prime$
\begin{align*}
	\lambda \| \ell(t) \|_{L^\infty_x}
	&\leqslant C_{d,s}  \| \lambda \ell(t) \|_{H^{s}_x}
	\leqslant \frac{C_{d,s}}{c} \| \sqrt{c^2 -\kappa \Delta} \lambda \ell(t) \|_{H^{s}_x}
	\\
	&\leqslant \frac{C_{d,s}}{c} \mathcal{E}_{s}(T^\prime)
	\leqslant
	\frac{2C_{d,s} C_1 e^{C_2}}{c} \mathcal{E}_s(0)
	\\
	&\leqslant 2C_{d,s} C_1 e^{C_2} \varepsilon.
\end{align*}
Here, we further require $\varepsilon$ to satisfy
 \begin{equation}\label{eps2}
 	0< \varepsilon \leqslant \frac{1}{2C_{d,s} C_1 e^{C_2}}\min\left\{
 	1-\mathcal{L}\left( \frac{3}{4} \right), \, \mathcal{L}\left( \frac{5}{4} \right) -1
 	\right\}.
 \end{equation}
 Then it follows that
 \begin{equation}\label{Linfell}
 	\mathcal{L}\left(\frac{3}{4}\right)
 	\leqslant
 	1+\lambda\ell(t,x)
 	\leqslant
 	\mathcal{L}\left(\frac{5}{4}\right)
 \end{equation}
for all $(t,x)\in[0,T^\prime]\times\mathbb{R}^d$.
Therefore, we can apply the energy estimates \eqref{EE1} and \eqref{EE2} in Lemma \ref{EnergyEst}
and the space-time estimate \eqref{ST} in Lemma \ref{STEst}.

By the space-time estimate \eqref{ST} and the definition of $T^\prime$, we see that
\begin{align*}
	\mathcal{B}(T^\prime) 
	&\leqslant C_5 
	(T^\prime)^{1-\frac{1}{q}}\kappa^{-\frac{1}{2q}-\frac{1}{2}} c^{-2}
	\left\{
	\mathcal{H}_s(0) + \mathcal{B}(T^\prime) \mathcal{H}_s(T^\prime)
	\right\}
	\\
	&\leqslant C_5
	T^{1-\frac{1}{q}}\kappa^{-\frac{1}{2q}-\frac{1}{2}} c^{-2}
	\left\{
	\mathcal{H}_s(0) + 2 C_3 e^{C_4} \mathcal{H}_s(0)
	\right\}
	\\
	&= C_5 (1+ 2C_3 e^{C_4})
	T^{1-\frac{1}{q}}\kappa^{-\frac{q+1}{2q}} c^{-2} \mathcal{H}_s(0).
\end{align*}
We now choose $T>0$ so that
\begin{equation}\label{time}
	T \leqslant 
	\left\{
	\frac{1}{2 C_5(1+ 2C_3 e^{C_4})}
	\right\}^{\!\frac{q}{q-1}}
	\kappa^{\frac{q+1}{2(q-1)}}
	c^{\frac{2q}{q-1}}
	\mathcal{H}_s(0)^{-\frac{q}{q-1}}.
\end{equation}
This yields $\mathcal{B}(T^\prime) \leqslant 1/2$. Moreover, it follows that
\begin{align*}
	\lambda \int_0^{T^\prime} \| \nabla \phi(t) \|_{L^\infty} \| \nabla \ell(t) \|_{L^\infty} \, dt
	&\leqslant
	C_{d,s}  \int_0^{T^\prime} \lambda
	\| \nabla \phi(t) \|_{H^{s}} 
	\| \nabla  \ell(t) \|_{L^\infty} \, dt
	\\
	&\leqslant \frac{C_{d,s}}{c} \mathcal{E}_{s}(T^\prime)  c \lambda 
	\| \nabla \ell \|_{L^1_{T^\prime}L^\infty_x} 
	\\
	&\leqslant 
	\frac{C_{d,s}}{c} 2 C_1 e^{C_2} \mathcal{E}_{s}(0)\mathcal{B}(T^\prime)
	\\
	&\leqslant C_{d,s} C_1 e^{C_2} \varepsilon.
\end{align*}
Finally, we require $\varepsilon>0$ to satisfy
 \begin{equation}\label{eps3}
 	0 < \varepsilon \leqslant \frac{1}{4 C_{d,s}C_1 e^{C_2}}.
 \end{equation}
 We take $\varepsilon>0$ satisfying
 \eqref{eps1}, \eqref{eps2}, and \eqref{eps3}.
 Then, we have 
 \[
 \lambda \int_0^{T^\prime} \| \nabla \phi(t) \|_{L^\infty} \| \nabla \ell(t) \|_{L^\infty} \, dt
 \leqslant \frac{1}{4}.
 \]
Therefore, 
 it follows from the energy estimates \eqref{EE1} for $\mathcal{E}_s(t)$ and 
 \eqref{EE2} for $\mathcal{H}_s(t)$ that
 \begin{align*}
 	\mathcal{E}_s(T^\prime )
 	&\leqslant C_1 \mathcal{E}_s(0)
 	\exp\left\{ C_2 \left(
 	\mathcal{B}(T^\prime) + \lambda \int_0^{T^\prime} \| \nabla \phi(\tau) \|_{L^\infty} \| \nabla \ell(\tau) \|_{L^\infty} \, d\tau
 	\right) \right\}
 	\\
 	&\leqslant C_1 \mathcal{E}_s(0) \exp\left\{
 	\frac{3}{4} C_2
 	\right\},
 \end{align*}
and
 \begin{align*}
 	\mathcal{H}_s(T^\prime) 
 	&\leqslant C_3 \mathcal{H}_s(0)
 	\exp\left\{ C_4 \left(
 	\mathcal{B}(T^\prime) + \lambda \int_0^{T^\prime} \| \nabla \phi(\tau) \|_{L^\infty} \| \nabla \ell(\tau) \|_{L^\infty} \, d\tau
 	\right) \right\}
 	\\
 	&\leqslant C_3 
 	\mathcal{H}_s(0)
 	\exp\left\{
 	\frac{3}{4}C_4
 	\right\},
 \end{align*}
respectively. Hence we obtain
\[
\mathcal{E}_{s}(T^\prime) \leqslant C_1 e^{\frac{3}{4}C_2} \mathcal{E}_s(0),
\quad 
\mathcal{H}_s(T^\prime) \leqslant C_3 e^{\frac{3}{4}C_4} \mathcal{H}_s(0),
\quad 
\mathcal{B}(T^\prime) \leqslant \frac{1}{2}
\]
provided that the time $T$ satisfies \eqref{time}.
These estimates are strict improvements of the bootstrap assumptions in the definition of $T^\prime$.
Since $T^\prime<\min\{T,T^\ast\}$, the time continuity of the solution $(\ell, \nabla\phi)$ implies that
there exists $T^\prime < \tilde{T} < \min\{ T, \, T^\ast \}$ such that
\[
\mathcal{E}_{s}(\tilde{T}) \leqslant \frac{3}{2}C_1 e^{C_2} \mathcal{E}_s(0),
\quad 
\mathcal{H}_s(\tilde{T}) \leqslant \frac{3}{2}C_3 e^{C_4} \mathcal{H}_s(0),
\quad
\mathcal{B}(\tilde{T}) \leqslant \frac{3}{4}.
\]
This contradicts the definition of $T^\prime$.
Hence we have $T^\prime= \min\{ T, \, T^\ast \}$.

It remains to show that $T^\ast\geqslant T$.
Suppose, to the contrary, that $T^\ast<T$. Then we have $T^\prime=T^\ast$.
Let $S \in (0, T^\ast)$. Similarly to \eqref{Linfell}, the smallness assumption on $\mathcal{E}_s(0)$
and \eqref{eps2} yields $3/4 \leqslant \rho(t,x) \leqslant 5/4$ for $(t,x) \in [0, S] \times \mathbb{R}^d$.
Since $S<T^\ast$ is arbitrary, it holds that 
\[
\rho([0, T^\ast) \times \mathbb{R}^d) \subset \left[\frac{3}{4}, \, \frac{5}{4}\right].
\]
Moreover, since $u=\nabla \phi$, we have
\[
\int_0^S \| \operatorname{div} u(t) \|_{L^\infty} \, dt
= \int_0^S \| \Delta \phi(t) \|_{L^\infty} \, dt
\leqslant \mathcal{B}(S) \leqslant 1.
\]
We next control $\| \Delta\rho \|_{L^1_S L^\infty_x}$. Since
 \[
 \Delta \rho 
 = \frac{1}{\mathcal{L}^\prime(\rho)} \lambda \Delta \ell
 - \frac{\mathcal{L}^{\prime\prime}(\rho)}{\mathcal{L}^\prime(\rho)^3} \lambda^2 |\nabla \ell|^2
 \]
and $3/4 \leqslant \rho \leqslant 5/4$, it holds that
\begin{align*}
	\int_0^S \| \Delta \rho(t) \|_{L^\infty} \, dt
	&\lesssim 
	\lambda \int_0^S \| \Delta \ell(t) \|_{L^\infty} \, dt
	+ \lambda^2 \int_0^S \| \nabla \ell(t) \|_{L^\infty}^2 \, dt
	\\
	&\lesssim \frac{1}{\sqrt{\kappa}} \mathcal{B}(S)
	+ \frac{1}{c\sqrt{\kappa}} \mathcal{E}_s(S) \mathcal{B}(S)
	\\
	&\lesssim 
	\left\{
	\frac{1}{\sqrt{\kappa}} + \frac{C_1 e^{C_2}}{c\sqrt{\kappa}} \mathcal{E}_s(0)
	\right\}.
\end{align*}
Together with $\operatorname{curl}u=0$, we obtain
\[
\int_0^{T^\ast}
\left(
\|\operatorname{div}u(t)\|_{L^\infty}
+
\|\Delta\rho(t)\|_{L^\infty}
\right)\,dt
<\infty.
\]
Therefore, the continuation criterion for the Euler--Korteweg system established
in \citep{BGDD07} implies that $(\rho,u)$ can be extended beyond $T^\ast$.
Equivalently, $(\ell,\nabla\phi)$ can be continued beyond $T^\ast$,
contradicting the maximality of $T^\ast$.
Therefore, $T^\ast\geqslant T$, and hence
\[
T^\ast
\geqslant
\delta
\kappa^{\frac{q+1}{2(q-1)}}
c^{\frac{2q}{q-1}}
\mathcal{H}_s(0)^{-\frac{q}{q-1}}
\]
for some $\delta=\delta(d,s,q,K,P)>0$.
This completes the proof of Theorem \ref{main}.
\end{proof}

\section*{Acknowledgments}
The authors would like to thank Professor Hirokazu Saito
for valuable discussions and helpful advice.
R. Takada was supported in part by JSPS KAKENHI Grant Numbers JP22K03388 and JP26K06876.
T. Yoshizawa was supported in part by FoPM, WINGS Program, the University of Tokyo.

\bibliographystyle{alpha} 
\bibliography{refs} 

\end{document}